\theoremstyle{definition} 
\newtheorem{thm}{Theorem}[section]
\newtheorem{lem}[thm]{Lemma}
\newtheorem{conj}[thm]{Conjecture}
\theoremstyle{definition}
\newtheorem{rmk}[thm]{Remark}
\theoremstyle{definition}
\theoremstyle{remark}
\newcommand{\GL}{\mathrm{GL}}
\newcommand{\Aut}{\mathrm{Aut}}
\newcommand{\bZ}{\mathbb{Z}}
\newcommand{\bF}{\mathbb{F}}
\newcommand{\mr}{\mathrm}
\newcommand{\mf}{\mathfrak}
\newcommand{\Mat}{\mathrm{Mat}}
\newcommand{\Prob}{\mathrm{Prob}}
\newcommand{\rk}{\mr{rank}}
\newcommand{\id}{\mathrm{id}}
\newcommand{\bs}{\boldsymbol}
\newcommand{\ra}{\rightarrow}
\newcommand{\lra}{\leftrightarrow}
\newcommand{\op}{\oplus}
\newcommand{\cok}{\mathrm{cok}}
\newcommand{\be}{\begin{enumerate}}
\newcommand{\ee}{\end{enumerate}}
\numberwithin{equation}{section}
\begin{document}

\title[Cokernels of $p$-adic matrices]{Generalizations of results of Friedman and Washington on cokernels of random $p$-adic matrices}
\date{\today}
\author{Gilyoung Cheong}
\address{Department of Mathematics, University of California, Irvine, 340 Rowland Hall, Irvine, CA 92697}
\email{gilyounc@uci.edu}

\author{Nathan Kaplan}
\address{Department of Mathematics, University of California, Irvine, 340 Rowland Hall, Irvine, CA 92697}
\email{nckaplan@uci.edu}

\begin{abstract}
Let $p$ be prime and $X$ be a Haar-random $n \times n$ matrix over $\bZ_{p}$, the ring of $p$-adic integers. Let $P_{1}(t), \dots, P_{l}(t) \in \bZ_{p}[t]$ be monic polynomials of degree at most $2$ whose images modulo $p$ are distinct and irreducible in $\bF_{p}[t]$.  For each $j$, let $G_{j}$ be a finite module over $\bZ_{p}[t]/(P_{j}(t))$. We show that as $n$ goes to infinity, the probabilities that $\cok(P_{j}(X)) \simeq G_{j}$ are independent, and each probability can be described in terms of a Cohen--Lenstra distribution. We also show that for any fixed $n$, the probability that $\cok(P_{j}(X)) \simeq G_{j}$ for each $j$ is a constant multiple of the probability that that $\cok(P_{j}(\bar{X})) \simeq G_{j}/pG_{j}$ for each $j$, where $\bar{X}$ is an $n \times n$ uniformly random matrix over $\bF_{p}$. These results generalize work of Friedman and Washington and prove new cases of a conjecture of Cheong and Huang.
\end{abstract}

\maketitle

\section{Introduction}

Throughout this paper, let $p$ be a prime. For a commutative ring $R$, let $\Mat_n(R)$ denote the set of $n \times n$ matrices with entries in $R$ and let $I_{n}$ denote the $n \times n$ identity matrix.  The Haar measure on the additive group $\Mat_{n}(\bZ_{p}) = \bZ_{p}^{n^{2}}$, with respect to its compact $p$-adic topology, allows one to choose a random matrix $X \in \Mat_{n}(\bZ_{p})$. Let $\cok(X)$ denote the cokernel of $X$.  In \cite{FW}, Friedman and Washington proved that the probability that $\cok(X)$ is isomorphic to a fixed finite abelian $p$-group $G$ converges to $|\Aut(G)|^{-1}\prod_{i=1}^{\infty}(1 - p^{-i})$ as $n \rightarrow \infty$. In particular, this probability is inversely proportional to the size of the automorphism group of $G$. For odd $p$, this probability is the one given in an influential conjecture of Cohen and Lenstra on the distribution of $p$-parts of class groups of imaginary quadratic fields, first introduced in \cite{CL}. Motivated by function field analogues of the Cohen--Lenstra conjecture, Friedman and Washington also proved that
\[
\lim_{n \ra \infty}\underset{X \in \GL_{n}(\bZ_{p})}{\Prob}(\cok(X - I_n) \simeq G) 
= \frac{1}{|\Aut(G)|} \prod_{i=1}^{\infty}(1 - p^{-i}),
\]
that is,
\[
\lim_{n \ra \infty}\underset{X \in \Mat_{n}(\bZ_{p})}{\Prob}\begin{pmatrix} \cok(X) = 0, \\ \cok(X - I_n) \simeq G
\end{pmatrix} = \left(\lim_{n \ra \infty}\underset{X \in \Mat_{n}(\bZ_{p})}{\Prob} (\cok(X) = 0) \right) \left(\lim_{n \ra \infty}\underset{X \in \Mat_{n}(\bZ_{p})}{\Prob} (\cok(X - I_n) \simeq G)\right).
\]
Our first main theorem is a generalization of these results.  For a commutative ring $R$ and an $R$-module $G$, let $\Aut_R(G)$ denote the group of $R$-linear automorphisms of $G$.

\begin{thm}\label{main} 
Let $P_{1}(t), \dots, P_{l}(t) \in \bZ_{p}[t]$ be monic polynomials of degree at most $2$ whose images modulo $p$ are distinct and irreducible in $\bF_{p}[t]$.  For each $j$, let $G_{j}$ be a finite module over $\bZ_{p}[t]/(P_{j}(t))$. We have
\[
\lim_{n \ra \infty}\underset{X \in \Mat_{n}(\bZ_{p})}{\Prob}\begin{pmatrix} \cok(P_{j}(X)) \simeq G_{j} \\ \text{for } 1 \leq j \leq l
\end{pmatrix} = \prod_{j=1}^{l}\frac{1}{|\Aut_{\bZ_{p}[t]/(P_{j}(t))}(G_{j})|}\left(\prod_{i=1}^{\infty}\big(1 - p^{-i\deg(P_{j})}\big)\right).
\]
\end{thm}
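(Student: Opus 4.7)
The plan is to perform a joint surjection-moment computation in the spirit of Cohen--Lenstra. Set $R_{j} := \bZ_{p}[t]/(P_{j}(t))$; since $\bar{P}_{j}$ is irreducible in $\bF_{p}[t]$, $P_{j}$ is irreducible in $\bZ_{p}[t]$ and $R_{j}$ is the unramified extension of $\bZ_{p}$ of degree $d_{j} := \deg P_{j}$, hence a DVR with residue field $\bF_{p^{d_{j}}}$. The cokernel $\cok(P_{j}(X))$ inherits a canonical $R_{j}$-module structure. The Cohen--Lenstra measure $\mu_{R_{j}}$ on finite $R_{j}$-modules, assigning $G$ the mass $|\Aut_{R_{j}}(G)|^{-1}\prod_{i \geq 1}(1 - p^{-id_{j}})$, is uniquely characterized by the surjection moments $\bE_{G \sim \mu_{R_{j}}}[|\Surj_{R_{j}}(G,H)|] = 1$ for every finite $R_{j}$-module $H$ (standard Cohen--Lenstra moment-determinacy, as in Wood). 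Thus it suffices to show that for every tuple of finite $R_{j}$-modules $H_{j}$,
\[
M_{n}(H_{1},\ldots,H_{l}) := \bE_{X}\left[\prod_{j=1}^{l}|\Surj_{R_{j}}(\cok(P_{j}(X)), H_{j})|\right] \longrightarrow 1 \text{ as } n \to \infty,
\]
and then to invoke a multivariate version of the moment theorem to extract joint convergence to $\prod_{j}\mu_{R_{j}}$, which is precisely the asserted independence-with-Cohen--Lenstra-factors.

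To compute $M_{n}$, identify each $R_{j}$-linear surjection $\cok(P_{j}(X))\tra H_{j}$ with a $\bZ_{p}$-linear surjection $f_{j}\colon\bZ_{p}^{n}\tra H_{j}$ satisfying $f_{j}\circ X = t\cdot f_{j}$, i.e., the $\bZ_{p}[t]$-linearity condition where $t$ acts on $\bZ_{p}^{n}$ via $X$ and on $H_{j}$ via its $R_{j}$-structure. Linearity of expectation yields
\[
M_{n} = \sum_{(f_{j})}\Prob_{X}\big(f_{j}\circ X = t\cdot f_{j} \text{ for all } j\big),
\]
summed over tuples of $\bZ_{p}$-linear surjections. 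Since the columns of $X$ are independent under Haar measure, the probability factors column-by-column: the $i$-th column must be sent by the joint map $F := (f_{1},\ldots,f_{l})\colon\bZ_{p}^{n}\to\prod_{j}H_{j}$ to the prescribed vector $(t\cdot f_{j}(e_{i}))_{j}$. This contributes $1/|\im F|$ per column whenever that target lies in $\im F$, and $0$ otherwise, so the probability is nonzero exactly when $\im F$ is $t$-stable inside $\prod_{j}H_{j}$. The algebraic crux is that the distinct irreducibles $\bar{P}_{j}$ make the $P_{j}$ pairwise comaximal in $\bZ_{p}[t]$ (via Nakayama applied to the finitely generated $\bZ_{p}$-module $\bZ_{p}[t]/(P_{j}, P_{j'})$), so CRT gives $\bZ_{p}[t]/\prod_{j}P_{j}(t) \simeq \prod_{j}R_{j}$. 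Consequently any $t$-stable $\bZ_{p}$-submodule of $\prod_{j}H_{j}$ decomposes as $\prod_{j}M_{j}$ with $M_{j}\subseteq H_{j}$ an $R_{j}$-submodule, and surjectivity of each $f_{j} = \pi_{j}\circ F$ forces $M_{j} = H_{j}$. Hence
\[
M_{n} = \frac{|\Surj_{\bZ_{p}}(\bZ_{p}^{n}, \textstyle\prod_{j}H_{j})|}{\prod_{j}|H_{j}|^{n}} \longrightarrow 1
\]
by the standard estimate that the proportion of $\bZ_{p}$-linear maps $\bZ_{p}^{n}\to H$ which are surjective tends to $1$ for any fixed finite $H$.

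The main obstacle will be the moment-to-distribution step: one needs uniform-in-$n$ control on the mixed moments to justify reconstructing individual probabilities, and a version of the Cohen--Lenstra moment theorem that accommodates several distinct coefficient rings $R_{j}$ simultaneously. By contrast, the column-by-column probability computation and the CRT reduction of $t$-stable submodules to products are essentially mechanical once the framework is in place; once moment convergence passes to convergence of joint distributions, reading off $\Prob(\cok(P_{j}(X))\simeq G_{j})$ from $\mu_{R_{j}}$ gives the advertised formula $\prod_{j}|\Aut_{R_{j}}(G_{j})|^{-1}\prod_{i\geq 1}(1 - p^{-id_{j}})$.
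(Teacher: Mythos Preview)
Your approach is correct and genuinely different from the paper's. The paper does not use moments at all: it first proves an exact counting statement for matrices over $\bZ/p^{N+1}\bZ$ with a prescribed reduction modulo $p$ (their Theorem~\ref{main3}), deduces an exact fixed-$n$ formula relating $\Prob_X$ to $\Prob_{\bar{X}}$ (their Theorem~\ref{main2}), and only then passes to the limit. The $\deg P_j \le 2$ hypothesis is used in that direct counting, specifically to write $\cok(P_j(X))$ as $\cok_{R_j}(X-\bar t I_n)$ and then separate off the contribution of $P_j$ via block elementary operations. Your moment computation, by contrast, never uses the degree bound: the CRT step $\bZ_p[t]/(\prod_j P_j)\simeq\prod_j R_j$ and the column-by-column probability $1/|\im F|$ work for arbitrary degrees, so your argument actually targets the stronger Conjecture~\ref{conj}. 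This is exactly the route the paper attributes to Lee in their Remark following Conjecture~\ref{conj3}.

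What each approach buys: the paper's counting method yields the finer fixed-$n$ statements (Theorems~\ref{main2} and~\ref{main3}), which your moment method cannot see; conversely, your method removes the degree restriction but gives only the $n\to\infty$ limit. Regarding the step you flag as the main obstacle, the cleanest packaging is to regard $\bigoplus_j \cok(P_j(X))$ as a module over the semilocal ring $R:=\prod_j R_j$, so that your mixed moment becomes a single $R$-surjection moment $\bE\big[\lvert\Surj_R(\cdot,\bigoplus_j H_j)\rvert\big]\to 1$. The moment-determines-distribution theorem over a finite product of complete DVRs with finite residue fields then applies (this is in the scope of the Wood/Sawin--Wood framework you allude to; alternatively one can run the M\"obius inversion over the lattice of finite $R$-modules, which factors over the $R_j$). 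Either way, your outline is sound; the only genuine work beyond what you wrote is pinning down a citable form of that multivariate moment theorem.
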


This result may be surprising to the reader because if we take $l=2$ with $P_{1}(t) = t$ and $P_{2}(t) = t - 1$, then for any $n$, many events regarding the matrices $P_{1}(X) = X$ and $P_{2}(X) = X - I_{n}$ are dependent as the entries of $X$ completely determine the entries of $X - I_{n}$ and vice versa. Nevertheless, Theorem \ref{main} shows that, for example, the event $\cok(X) \simeq \bZ/p\bZ$ becomes independent from the event $\cok(X-I_{n}) \simeq \bZ/p\bZ$ as $n \ra \infty$. Theorem \ref{main} also proves many new cases of a conjecture of Cheong and Huang \cite[Conjecture 2.3]{CH}. We note that the conjecture needs to be slightly modified from their version, as explained below. In \cite[Theorem C]{CH}, Cheong and Huang proved Theorem \ref{main} when $G_{1} = \cdots = G_{l-1} = 0$ and $\deg(P_l) = 1$, so our result is a significant improvement of theirs.

\begin{conj}[cf. {\cite[Conjecture 2.3]{CH}}]\label{conj} 
The conclusion of Theorem \ref{main} holds without specifying any conditions on the degrees of $P_{1}(t), \dots, P_{l}(t) \in \bZ_{p}[t]$.
\end{conj}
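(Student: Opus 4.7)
The plan is to use the method of moments: compute $M_n(H_1, \ldots, H_l) := \bE_X\!\left[\prod_{j=1}^l \#\Surj_{R_j}(\cok(P_j(X)), H_j)\right]$, where $R_j := \bZ_p[t]/(P_j(t))$ and each $H_j$ is a finite $R_j$-module, show that the limit as $n \to \infty$ factors as $\prod_j \lim_n \bE_X[\#\Surj_{R_j}(\cok(P_j(X)), H_j)]$, and invoke a moment-uniqueness result to pass from joint moments to joint probabilities.

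The key reformulation is that an $R_j$-linear surjection $\cok(P_j(X)) \tra H_j$ is the same as a $\bZ_p$-linear surjection $F_j : \bZ_p^n \to H_j$ satisfying $F_j X = t F_j$, where $t$ denotes the given action on $H_j$. Writing $H := \bigoplus_j H_j$ with combined $t$-action $T$, and $F := (F_j)_j$, we obtain
\[
M_n(H_1, \ldots, H_l) = \sum_F \Prob_X(FX = TF),
\]
where $F$ ranges over $\bZ_p$-linear maps $\bZ_p^n \to H$ with each component surjective. For a fixed $F$ with image $K \subseteq H$, the map $X \mapsto FX$ pushes Haar measure forward to the uniform measure on $K^n$, so $\Prob(FX = TF) = |K|^{-n}$ when $K$ is $T$-stable and $0$ otherwise. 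Grouping by image, $M_n = \sum_K |\Surj(\bZ_p^n, K)|/|K|^n$ where $K$ ranges over $\bZ_p[t]$-submodules of $H$ with $\pi_j(K) = H_j$ for each $j$.

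Taking $n \to \infty$, we have $|\Surj(\bZ_p^n, K)|/|K|^n \to 1$ for each fixed finite $K$, so $\lim M_n$ equals the number of such $K$. The hypothesis that the reductions $\bar P_j \in \bF_p[t]$ are distinct irreducibles implies that each $R_j$ is a complete DVR with residue field $\bF_{p^{\deg P_j}}$ and that $\bar P_i$ is a unit in $R_j/p$ for $i \neq j$; hence the $P_j$'s are pairwise coprime in $\bZ_p[t]$. By the Chinese Remainder Theorem, $H$ is naturally a module over $\prod_j R_j$ and decomposes accordingly as $\bigoplus_j H_j$, so every $\bZ_p[t]$-submodule $K \subseteq H$ splits as $\bigoplus_j (K \cap H_j)$. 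The projection condition then forces $K \cap H_j = H_j$ for each $j$, so $K = H$ and $\lim M_n = 1 = \prod_j 1$.

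Finally, the single-polynomial moment $\lim_n \bE_X[\#\Surj_{R_j}(\cok(P_j(X)), H_j)] = 1$ matches the surjection moment of the Cohen-Lenstra measure $\mu_j$ on finite $R_j$-modules assigning weight $|\Aut_{R_j}(G)|^{-1} \prod_{i \geq 1}(1 - p^{-i \deg P_j})$ to $G$. I would then invoke a moment-uniqueness theorem in the spirit of Sawin-Wood to deduce convergence in distribution of $(\cok(P_j(X)))_{j=1}^l$ to the product measure $\prod_j \mu_j$, yielding the claimed formula. The main obstacle is this last step: one must verify the growth conditions required by such a uniqueness theorem across a product of Cohen-Lenstra measures for possibly distinct DVRs, and bound error terms uniformly in the choice of test modules $(H_j)$ so as to justify the exchange of limits and summation that lets surjection-moment data pin down probabilities.
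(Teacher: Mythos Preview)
The statement you address is Conjecture~\ref{conj}, which the paper does \emph{not} prove; the paper establishes only the case $\deg P_j \le 2$ (Theorem~\ref{main}) via the explicit lift-counting of Theorem~\ref{main3}, and remarks that Lee has announced a proof of the full conjecture by a different method. Your moment argument is essentially that different method.

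Your outline is correct. The identification of $R_j$-linear surjections $\cok(P_j(X)) \tra H_j$ with $\bZ_p$-linear maps $F_j \colon \bZ_p^n \to H_j$ satisfying $F_j X = T_j F_j$ is right, as is the pushforward computation $\Prob_X(FX = TF) = |K|^{-n}$ for $K = \im(F)$ when $K$ is $T$-stable. The CRT step is also fine: since $p^N H = 0$ one may work in $(\bZ/p^N\bZ)[t]$, where $1 + p\cdot(\text{anything})$ is a unit, so the $P_j$ are pairwise comaximal there and every $\bZ_p[t]$-submodule $K \subseteq H$ decomposes as $\bigoplus_j (K \cap H_j)$. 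Hence the only admissible $K$ is $H$ itself, and in fact $M_n(H_1,\dots,H_l) = |\Surj(\bZ_p^n,H)|/|H|^n \le 1$ for every $n$, with limit $1$.

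The step you flag as the main obstacle is a genuine obligation but not a real difficulty. All limiting joint moments equal $1$, the finite-$n$ moments are uniformly bounded by $1$, and the target product of Cohen--Lenstra measures is the unique probability measure on tuples of finite $R_j$-modules (equivalently, finite modules over $\prod_j R_j$) with all surjection moments equal to $1$. The moment-uniqueness and robustness theorems of Wood (and in greater generality Sawin--Wood) apply directly in this setting; the uniform bound $M_n \le 1$ makes the exchange of limit and sum routine.

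Compared with the paper's approach, your method handles arbitrary $\deg P_j$ but yields only the $n \to \infty$ statement. The paper's explicit counting is confined to $\deg P_j \le 2$ but produces the sharper finite-$n$ identities (Theorems~\ref{main2} and~\ref{main3}); as the remark following Conjecture~\ref{conj3} notes, those finite-$n$ statements remain open in higher degree and are not accessible via moments.
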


\begin{rmk} Conjecture \ref{conj} is stated slightly differently in \cite[Conjecture 2.3]{CH}.  In that version, each module $G_{j}$ over $\bZ_{p}[t]/(P_{j}(t))$ is only assumed to be a finite abelian $p$-group. We note that $\cok(P_{j}(X))$ has a $\bZ_{p}[t]/(P_{j}(t))$-module structure, where the action of $\bar{t}$ is given via left multiplication by $X$. This implies that some finite abelian $p$-groups do not arise as $\cok(P_{j}(X))$ for any $X$.  For example, if $\deg(P_{j}) > 1$, then $\cok(P_{j}(\bar{X}))$ is a vector space over $\bF_{p}[t]/(P_{j}(t))$, so $\dim_{\bF_{p}}(\cok(P_{j}(\bar{X})))$ is a multiple of $\deg(P_{j})$, where $\bar{X} \in \Mat_{n}(\bF_{p})$ is the image of $X$ modulo $p$. Therefore, we see that $\cok(P_{j}(X))$, considered as a finite abelian $p$-group, cannot be isomorphic to $\bZ/p\bZ$.  As noted in \cite[Remark 2.2]{CH} or \cite[Example 5.9]{CL}, there is a \emph{Cohen-Lenstra distribution} on the set of isomorphism classes of finite modules over any DVR whose residue field is finite, that is, a distribution in which each module appears with frequency inversely proportional to its number of automorphisms. One may check that $\bZ_{p}[t]/(P_{j}(t))$ is indeed a DVR with its unique maximal ideal generated by $p$ and its residue field is isomorphic to $\bF_{p}[t]/(P_{j}(t))$.  We see that Conjecture \ref{conj} is a natural correction of \cite[Conjecture 2.3]{CH}. This change affects only Conjecture 2.3 in \cite{CH}, not any theorems in that paper.

Let $G$ and $G'$ be finite modules over $\bZ_{p}[t]/(P_{j}(t))$. Since $\bZ_{p}[t]/(P_{j}(t))$ is a PID, these modules have a specific type of structure.  We can check that $G$ and $G'$ are isomorphic as modules over $\bZ_{p}[t]/(P_{j}(t))$ if and only if they are isomorphic as finite abelian $p$-groups. We will use this observation in our proofs without mentioning it again.
\end{rmk}

Theorem \ref{main} follows from the following stronger result that holds for any fixed $n \in \bZ_{\geq 1}$, which is also a generalization of a result of Friedman and Washington in \cite{FW}. Let $P(t) \in \bZ_{p}[t]$ be a monic polynomial whose reduction modulo $p$ is irreducible in $\bF_{p}[t]$ and let $G$ be a finite module over $\bZ_{p}[t]/(P(t))$.  Define
\[
r_{p^{\deg(P)}}(G) := \dim_{\bF_{p^{\deg(P)}}}(G/pG),
\]
where we identify $\bF_{p^{\deg(P)}} = \bF_{p}[t]/(P(t))$. 

\begin{thm}\label{main2} 
Let $P_{1}(t), \dots, P_{l}(t) \in \bZ_{p}[t]$ be monic polynomials of degree at most $2$ whose images modulo $p$ are distinct and irreducible in $\bF_{p}[t]$ and let $q_{j} := p^{\deg(P_{j})}$.  For each $j$, let $G_{j}$ be a finite module over $\bZ_{p}[t]/(P_{j}(t))$.  We have
\[
\underset{X \in \Mat_{n}(\bZ_{p})}{\Prob}\left(\begin{array}{c}
\cok(P_{j}(X)) \simeq G_{j} \\
\text{for } 1 \leq j \leq l
\end{array}\right)
= \left(\prod_{j=1}^{l}\frac{q_{j}^{r_{q_{j}}(G_{j})^{2}}\prod_{i=1}^{r_{q_{j}}(G_{j})}(1 - q_{j}^{-i})^{2}}{|\Aut_{\bZ_{p}[t]/(P_{j}(t))}(G_{j})|}\right)
\underset{\bar{X} \in \Mat_{n}(\bF_{p})}{\Prob}\left(\begin{array}{c}
\cok(P_{j}(\bar{X})) \simeq G_{j}/pG_{j} \\
\text{for } 1 \leq j \leq l
\end{array}\right).
\]
\end{thm}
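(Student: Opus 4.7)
The plan is to establish Theorem \ref{main2} via the method of surjection moments, after first using the Chinese Remainder Theorem to consolidate the polynomials.

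Since $\bar P_1, \ldots, \bar P_l \in \bF_p[t]$ are distinct irreducibles, Hensel's lemma lifts their pairwise coprimality to $\bZ_p[t]$, and setting $Q := \prod_j P_j$ and $R := \bZ_p[t]/(Q)$ gives $R \simeq \prod_j R_j$. The cokernel then decomposes as $\cok(Q(X)) \simeq \prod_j \cok(P_j(X))$ as $R$-modules, and similarly for $\bar X$. So the theorem reduces to the corresponding single-polynomial statement for $\cok(Q(X))$ and $\cok(Q(\bar X))$.

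For any finite $R$-module $H$, an $R$-module surjection $\cok(Q(X)) \twoheadrightarrow H$ corresponds to a $\bZ_p$-linear surjection $\phi : \bZ_p^n \twoheadrightarrow H$ satisfying the equivariance $\phi \circ X = T_t \circ \phi$, where $T_t$ denotes multiplication by $t$ on $H$. Fixing such a $\phi$, the constraint on each column $X_k$ of $X$ reads $\phi(X_k) = T_t\phi(e_k)$, an event of Haar probability $1/|H|$ by surjectivity of $\phi$. Summing over surjections yields
\[
\bE_X[\#\Surj_R(\cok(Q(X)),\,H)] \;=\; \frac{\#\Surj_{\bZ_p}(\bZ_p^n, H)}{|H|^n} \;=\; \prod_{i=0}^{s-1}(1 - p^{i-n}),
\]
where $s := \dim_{\bF_p}(H/pH)$. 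The identical argument over $\bF_p$ yields the same expression for $\bE_{\bar X}[\#\Surj_{R/pR}(\cok(Q(\bar X)),\,H/pH)]$; hence the surjection moments are linked via $H \mapsto H/pH$.

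Since the surjection moments determine the distribution of a finite $R$-module (the matrix $(\#\Surj_R(G, H))_{G,H}$ being invertible), proving the theorem amounts to verifying that the candidate formula $p_G := C_G \,\bar p_{G/pG}$, with $C_G := \prod_j \frac{q_j^{r_j^2} \prod_{i=1}^{r_j}(1 - q_j^{-i})^2}{|\Aut_{R_j}(G_j)|}$, satisfies the moment relation $\sum_G p_G \#\Surj_R(G, H) = \bE_X[\#\Surj_R(\cok(Q(X)), H)]$. Grouping the sum by $\bar G := G/pG$ and using the moment matching reduces this to the combinatorial identity
\[
\sum_{G:\, G/pG \simeq \bar G} C_G \cdot \#\Surj_R(G, H) \;=\; \#\Surj_{R/pR}(\bar G,\, H/pH)
\]
for all $\bar G$ and $H$. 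The main obstacle will be establishing this identity; by the CRT factorization it suffices to prove the single-DVR case, where it becomes a Hall-polynomial assertion summing over partitions $\lambda$ of fixed length $r = \dim_{\bF_q}\bar G$. The constant appears naturally as the inverse of $\sum_{\ell(\lambda) = r} 1/|\Aut_R(\bigoplus_i R/p^{\lambda_i})|$, and the identity should succumb to a direct computation using the standard explicit formulas for $|\Aut_R(\bigoplus_i R/p^{\lambda_i})|$ and $\#\Surj_R(\bigoplus_i R/p^{\lambda_i}, H)$, together with Hall--Littlewood generating-function manipulations.
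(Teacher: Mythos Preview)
Your route is genuinely different from the paper's. The paper obtains Theorem \ref{main2} from the finer counting result Theorem \ref{main3}, proved via block row/column operations (Lemma \ref{elem}) and a Smith-normal-form analysis over each DVR; the hypothesis $\deg P_j\le 2$ enters in Section \ref{l1}, where one passes from $\cok(P(X))$ to $\cok_R(X-\bar{t}I_n)$ via Lemma \ref{Lee} and exploits the two-term decomposition $\Mat_n(R)=\Mat_n(\bZ/p^{N+1}\bZ)\oplus\bar{t}\,\Mat_n(\bZ/p^{N+1}\bZ)$. Your moment computation, by contrast, is correct and never uses the degree bound, so if the plan can be completed it would actually establish Conjecture \ref{conj2}, not just Theorem \ref{main2}.

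The gap is the combinatorial identity
\[
\sum_{G:\, G/pG \simeq \bar G} C_G \cdot \#\Surj_R(G, H) \;=\; \#\Surj_{R/pR}(\bar G,\, H/pH),
\]
which you label ``the main obstacle'' and then defer to unspecified Hall--Littlewood manipulations. After the (correct) moment matching, this identity \emph{is} the entire content of the theorem; gesturing at generating functions is not a proof. Indeed, one natural way to see the identity is to interpret $C_G$ as the conditional probability $\Prob(\cok(X)\simeq G\mid X\equiv\bar X\pmod p)$ for a fixed $\bar X$ with $\cok(\bar X)\simeq\bar G$, whereupon it becomes a conditional moment equality---but establishing that interpretation of $C_G$ is precisely Lemma \ref{l1deg1}, the core of the paper's argument, so along that route you have not bypassed anything. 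If you have an independent partition-theoretic derivation in mind, it must be written out. A secondary issue: moment determinacy over the infinite set of finite $R$-modules does not follow from the bare assertion that ``the matrix is invertible''; you should either argue level-by-level modulo $p^{N+1}$ on the finite set of bounded-rank $R/p^{N+1}$-modules, or cite an appropriate determinacy theorem.
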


\begin{rmk}\label{sumranks}
Theorem \ref{main2} is trivial when $n < \sum_{j=1}^{l} \dim_{\bF_{p}}(G_{j}/pG_{j})$ since each side of the equality is $0$. This follows from the discussion of the basics of the $\bF_q[t]$-module structure of a matrix $\bar{X} \in \Mat_n(\bF_q)$ given at the start of Section \ref{proof_outline} and the fact that $\cok(P_j(X)) \pmod{p} \simeq \cok(P_j(\bar{X}))$.
\end{rmk}

Theorem \ref{main2} follows from the following counting result for matrices in $\Mat_{n}(\bZ/p^{N+1}\bZ)$ with a fixed reduction modulo $p$.

\begin{thm}\label{main3} 
Assume the notation and hypotheses in Theorem \ref{main2}.
Fix any $\bar{X} \in \Mat_{n}(\bF_{p})$ such that for each $1 \leq j \leq l$, we have
\[
\dim_{\bF_{q_{j}}}\left(\cok(P_{j}(\bar{X}))\right) = r_{q_{j}}(G_{j}).
\]
Choose any $N \in \bZ_{\geq 0}$ such that $p^{N}G_{j} = 0$ for $1 \leq j \leq l$.  Then
\[
\#\left\{\begin{array}{c}
X \in \Mat_{n}(\bZ/p^{N+1}\bZ) \colon \\
\cok(P_{j}(X)) \simeq G_{j} \\
\text{for } 1 \leq j \leq l\\
\text{and } X \equiv \bar{X} \pmod{p}
\end{array}\right\} = p^{Nn^{2}}\prod_{j=1}^{l}\frac{q_{j}^{r_{q_{j}}(G_{j})^{2}}\prod_{i=1}^{r_{q_{j}}(G_{j})}(1 - q_{j}^{-i})^{2}}{|\Aut_{\bZ_{p}[t]/(P_{j}(t))}(G_{j})|}.
\]
In particular, the left-hand side does not depend on the choice of $\bar{X} \in \Mat_{n}(\bF_{p})$.
\end{thm}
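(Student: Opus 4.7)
The plan is to compute the surjection-count moments $S(\bs H)$ for all tuples $\bs H = (H_1, \ldots, H_l)$ of finite $R_j$-modules (with $R_j := \bZ_p[t]/(P_j(t))$), and then extract the desired count $N(\bs G)$ via Möbius inversion. Viewing $\bZ_p^n$ as a $\bZ_p[t]$-module via the $X$-action, an $R_j$-linear surjection $\cok(P_j(X)) \tra H_j$ corresponds bijectively to a $\bZ_p[t]$-linear surjection $\sg_j \colon \bZ_p^n \tra H_j$, and $\bZ_p[t]$-linearity is captured by the commutation $\sg_j X = X_{H_j}\sg_j$. Define
\[
S(\bs H) := \sum_{X \equiv \bar X \pmod p} \prod_{j=1}^l \#\Surj_{R_j}(\cok(P_j(X)), H_j),
\]
so $S(\bs H)$ counts tuples $(X, \sg_1, \ldots, \sg_l)$ with $X$ lifting $\bar X$ to $\Mat_n(\bZ/p^{N+1}\bZ)$ and each $\sg_j$ commuting with $X$ as above.

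The main step is to evaluate $S(\bs H)$ by swapping the order of summation. Fix a tuple $(\sg_j)$ and count compatible $X$. Writing $X = \tilde X + pZ$ for a fixed lift $\tilde X$ of $\bar X$ and $Z \in \Mat_n(\bZ/p^N\bZ)$, the equations reduce to the linear system $p\sg_j Z = X_{H_j}\sg_j - \sg_j \tilde X$, which is consistent iff each reduction $\bar \sg_j \colon \bF_p^n \tra H_j/pH_j$ is $\bF_p[t]$-linear with respect to $\bar X$. Since $\bar P_1, \ldots, \bar P_l$ are distinct irreducibles in $\bF_p[t]$, $\bF_p^n$ decomposes as an $\bF_p[t]$-module into primary components, and each $\bar \sg_j$ must factor through the $\bar P_j$-primary part modulo $\bar P_j$, which by hypothesis has $\bF_{q_j}$-dimension $r_j := r_{q_j}(G_j)$. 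Since distinct $\bar P_j$'s pick out disjoint primary summands, the number of compatible tuples $(\bar \sg_j)$ is $\prod_j \prod_{k=0}^{\rho_j-1}(q_j^{r_j} - q_j^k)$, where $\rho_j := r_{q_j}(H_j)$. A column-by-column count (using Nakayama to see that the combined $\oplus_j \sg_j$ is surjective onto $\oplus_j H_j$) gives $p^{Nn^2}\prod_j|H_j[p]|^n/|H_j|^n$ many $Z$ per compatible tuple. Combining with the $\prod_j|pH_j|^n$ lifts of each $(\bar \sg_j)$ to $(\sg_j)$ and the numerical identity $|pH_j|^n|H_j[p]|^n/|H_j|^n = 1$ yields
\[
S(\bs H) = p^{Nn^2} \prod_{j=1}^l \prod_{k=0}^{\rho_j-1}(q_j^{r_j} - q_j^k).
\]

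Because $S(\bs H)$ factors as a product over $j$, the Möbius inversion of the moment relation $S(\bs H) = \sum_{\bs G} N(\bs G) \prod_j \#\Surj_{R_j}(G_j, H_j)$ splits into $l$ independent single-polynomial inversions, each amounting to the identity
\[
\sum_{G\,:\,r_{q_j}(G) = r_j} \frac{\#\Surj_{R_j}(G, H_j)}{|\Aut_{R_j}(G)|} = \frac{\prod_{k=0}^{\rho_j-1}(q_j^{r_j} - q_j^k)}{q_j^{r_j^2}\prod_{i=1}^{r_j}(1-q_j^{-i})^2}
\]
for finite modules over the DVR $R_j$ with residue field $\bF_{q_j}$. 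This is a standard Cohen--Lenstra-type identity, verifiable via Hall-algebra generating functions or direct structure-theorem bookkeeping. Rearranging produces the formula in Theorem \ref{main3}.

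The main obstacle is the decoupling across different $j$'s in the moment computation: verifying that compatible $\bar \sg_j$'s can be chosen independently and that $\oplus_j \sg_j$ is then automatically surjective onto $\oplus_j H_j$. This rests on the primary decomposition of $\bF_p^n$ under $\bar X$ into components indexed by distinct irreducibles in $\bF_p[t]$, together with the observation that any map to a $\bar P_j$-torsion module is controlled by a single primary summand.
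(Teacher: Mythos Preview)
Your moment computation is correct and the route is genuinely different from the paper's. The paper proves the $l=1$ case by a direct Smith-normal-form count (Lemma~\ref{l1deg1}) and then reduces general $l$ to $l=1$ by block-diagonalizing $X-\bar t I_n$ over each $R_j$ and exhibiting an explicit bijection $\Phi_N$ built from iterated elementary block operations. You instead compute the joint surjection moments $S(\bs H)$ and show they factor over $j$; the decoupling (that the $\bar\sg_j$ factor through distinct primary components of $\bF_p^n$, so $\oplus_j\bar\sg_j$ is automatically surjective and the count of compatible $X$ is $p^{Nn^2}/|pH|^n$) is the clean replacement for the paper's block manipulation, and it makes no use of the hypothesis $\deg P_j\le 2$.

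The gap is in the final inversion. You work over $\Mat_n(\bZ/p^{N+1}\bZ)$, so the moment relation $S(\bs H)=\sum_{\bs G}N(\bs G)\prod_j\#\Surj(G_j,H_j)$ is a \emph{finite} linear system in which each $G_j$ runs only over $R_j$-modules annihilated by $p^{N+1}$. But the Cohen--Lenstra identity you invoke is the sum over \emph{all} $G$ with $r_{q_j}(G)=r_j$, and its truncation to $p^{N+1}G=0$ is false: already for $R_j=\bZ_p$, $r_j=1$, $H=0$, the truncated sum equals $\tfrac{p}{(p-1)^2}\bigl(1-p^{-(N+1)}\bigr)$ rather than $\tfrac{p}{(p-1)^2}$. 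So your candidate $G\mapsto q_j^{r_j^2}\prod_{i=1}^{r_j}(1-q_j^{-i})^2/|\Aut(G)|$ does not solve the finite system; the genuine solution $\nu_j$ differs from it on the boundary stratum $\{G:p^{N+1}G=0,\ p^NG\neq0\}$, and you cannot read off $\nu_j(G_j)$ at the target from the infinite identity alone. What your factorization \emph{does} establish is $N(\bs G)=p^{Nn^2}\prod_j\nu_j(G_j)$ with each $\nu_j$ depending only on $(P_j,r_j,N)$, so that $p^{Nm^2}\nu_j(G_j)$ coincides with the $l=1$ count for any auxiliary $\bar Y\in\Mat_m(\bF_p)$ of the correct rank. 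This is a clean reduction of general $l$ to $l=1$, but you still owe a proof of the $l=1$ formula---either via the paper's direct count, or by passing to Haar measure on $\Mat_n(\bZ_p)$ and invoking a genuine moment-uniqueness theorem (\`a la Wood) to justify inverting the infinite system.
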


\begin{conj}\label{conj2} 
The conclusion of Theorem \ref{main2} holds without specifying any conditions on the degrees of $P_{1}(t), \dots, P_{l}(t)$.
\end{conj}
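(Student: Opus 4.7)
The plan is to extend Theorem~\ref{main3} to polynomials of arbitrary degree, from which Conjecture~\ref{conj2} will follow by exactly the same argument used to deduce Theorem~\ref{main2}. My strategy has two parts: a Chinese Remainder Theorem reduction to the single-polynomial case, and then a moment computation that in the limit already gives Conjecture~\ref{conj}. The fixed-$n$ upgrade is the harder step.

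First, I would use that $\bar P_1,\dots,\bar P_l$ are distinct and irreducible in $\bF_p[t]$ to show that the $P_j$ are pairwise coprime already in $\bZ_p[t]$: the quotient $\bZ_p[t]/(P_i,P_j)$ is finitely generated over $\bZ_p$ and vanishes modulo $p$, so Nakayama forces $(P_i,P_j) = \bZ_p[t]$. Applying Bezout in $\bZ_p[t]$ and evaluating at $X$ produces matrix relations $a_{ij}(X)P_i(X)+b_{ij}(X)P_j(X)=I_n$, so with $Q := P_1\cdots P_l$ we obtain canonical decompositions $\bZ_p[t]/(Q) \simeq \prod_j \bZ_p[t]/(P_j)$ and $\cok(Q(X)) \simeq \bigoplus_j \cok(P_j(X))$ as $\bZ_p[t]/(Q)$-modules. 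The joint statement about the $\cok(P_j(X))$ thus reduces to a single statement about $\cok(Q(X))$ with its $\bZ_p[t]/(Q)$-structure.

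Next, I would compute equivariant surjection moments. A $\bZ_p[t]/(Q)$-surjection $\cok(Q(X)) \twoheadrightarrow G$ is the same data as a $\bZ_p$-surjection $\phi : \bZ_p^n \twoheadrightarrow G$ satisfying the intertwining condition $\phi \circ X = T \circ \phi$, where $T$ denotes multiplication by $t$ on $G$. For each fixed such $\phi$, the condition forces $Xe_i$ into a specific coset of $\ker\phi$ for each $i$, which has Haar probability $|G|^{-1}$; by independence of the columns of $X$ these combine to probability $|G|^{-n}$. Therefore
\[
\bE_X\bigl[|\Surj_{\bZ_p[t]/(Q)}(\cok(Q(X)), G)|\bigr] = |G|^{-n}\cdot|\Surj_{\bZ_p}(\bZ_p^n, G)| = \prod_{i=0}^{r-1}\bigl(1-p^{i-n}\bigr),
\]
where $r = \dim_{\bF_p}(G/pG)$. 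This calculation uses no restriction on the degrees, and the limit as $n\to\infty$ is $1$, matching the Cohen--Lenstra moments on finite $\bZ_p[t]/(Q)$-modules; standard moment-uniqueness results together with the CRT decomposition then give Conjecture~\ref{conj} in full generality.

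For the fixed-$n$ statement of Conjecture~\ref{conj2} itself, I would refine the moment method by conditioning on $\bar X := X \bmod p$. Writing $X = \bar X + pY$ and expanding $P_j(X)$ to all orders in $p$, the event $\cok(P_j(X))\simeq G_j$ translates into a system of congruences on $Y$ modulo successive powers of $p$; matching conditional moments on this stratified space of lifts and inverting via M\"obius should, in principle, recover the explicit constants $q_j^{r_{q_j}(G_j)^2}\prod_{i=1}^{r_{q_j}(G_j)}(1-q_j^{-i})^2/|\Aut_{\bZ_p[t]/(P_j)}(G_j)|$ appearing in Theorem~\ref{main2}, and also the $\bar X$-independence asserted in the generalized Theorem~\ref{main3}. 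The main obstacle, and presumably what restricted the authors to $\deg P_j \leq 2$, is precisely this lift-counting when $\deg P_j \geq 3$: it forces one to understand twisted $\GL_n$-centralizers over the higher-degree unramified extension $\bZ_p[t]/(P_j)$, whose rational canonical forms are much less rigid than in the linear or quadratic case, and for which a clean inductive Hensel-lifting argument is not readily available.
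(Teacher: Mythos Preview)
The statement you are asked to prove is a \emph{conjecture} that the paper explicitly leaves open. In the remark following Conjecture~\ref{conj3} the authors write that ``Conjectures~\ref{conj2} and~\ref{conj3} remain open,'' and that Lee's forthcoming work proves only the limiting Conjecture~\ref{conj}, not the fixed-$n$ statements. So there is no ``paper's own proof'' to compare against.

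Your moment computation for the limiting statement is correct and is essentially the route the paper attributes to Lee: the equivariant surjection moment $\bE_X\bigl[|\Surj_{\bZ_p[t]/(Q)}(\cok(Q(X)),G)|\bigr]=\prod_{i=0}^{r-1}(1-p^{i-n})\to 1$ really does determine the Cohen--Lenstra distribution on finite $\bZ_p[t]/(Q)$-modules (after invoking a moment-determinacy result such as Wood's), and the CRT decomposition you describe then splits this into the product form of Conjecture~\ref{conj}. So the first two paragraphs of your proposal amount to an outline of a genuine proof of Conjecture~\ref{conj}.

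The gap is in the third paragraph, where you address Conjecture~\ref{conj2} itself. ``Matching conditional moments on this stratified space of lifts and inverting via M\"obius should, in principle, recover the explicit constants'' is not a proof; it is a hope, and you yourself name the obstacle in the next sentence. The passage from limiting moments to the exact fixed-$n$ identity of Theorem~\ref{main2} is precisely what is hard: moments alone do not yield the conditional independence of $\cok(P_j(X))\simeq G_j$ given $\bar X$, nor the exact normalizing constants, without carrying out the lift-count that is the content of Theorem~\ref{main3}. The paper's proof of Theorem~\ref{main3} for $\deg P_j\le 2$ relies on Lemma~\ref{Lee} to linearize $P_j(X)$ as $X-\bar t I_n$ over $R_j=(\bZ/p^{N+1}\bZ)[t]/(P_j)$ and then on a delicate inductive bijection $\Phi_N$ between lifts; neither step has an evident analogue for $\deg P_j\ge 3$, and nothing in your proposal supplies one. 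As written, your proposal proves Conjecture~\ref{conj} but not Conjecture~\ref{conj2}.
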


\begin{conj}\label{conj3} 
The conclusion of Theorem \ref{main3} holds without specifying any conditions on the degrees of $P_{1}(t), \dots, P_{l}(t)$.
\end{conj}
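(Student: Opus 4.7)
The plan is to establish Conjecture \ref{conj3} by extending the counting machinery behind Theorem \ref{main3} to polynomials of arbitrary degree, identifying and removing the step that currently forces $\deg(P_j) \le 2$. Since $\bar{P}_1,\dots,\bar{P}_l$ are pairwise coprime in $\bF_p[t]$, the polynomials $P_j$ are pairwise coprime in $\bZ_p[t]$, and the Chinese Remainder Theorem gives an isomorphism $\bZ_p[t]/(\prod_j P_j(t)) \simeq \prod_j \bZ_p[t]/(P_j(t))$, with each factor a DVR of residue field $\bF_{q_j}$. The strategy is to use this decomposition to reduce the joint count to a product of local counts, one per $j$, in which only the $\bZ_p[t]/(P_j(t))$-module structure of $\cok(P_j(X))$ enters, thereby recovering the Cohen--Lenstra local factor appearing in Theorem \ref{main3}.

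After fixing $\bar{X} \in \Mat_n(\bF_p)$ with $\dim_{\bF_{q_j}} \cok(P_j(\bar{X})) = r_{q_j}(G_j)$ for each $j$, I would run a Hensel-type induction on $N$. Writing a lift as $X_{N+1} = X_N + p^N Y$ with $Y \in \Mat_n(\bF_p)$, the change in $P_j(X_N)$ is, to first order, the Fr\'echet-style derivative
\[
p^N \sum_{k=1}^{\deg(P_j)} a_{k,j} \sum_{a+b = k-1} X_N^{a}\, Y\, X_N^{b},
\]
where the $a_{k,j}$ are the coefficients of $P_j$. The key claim to prove is that, for each admissible isomorphism class $G_j$, the number of $Y$'s producing the prescribed cokernel type at the next level depends only on the current $\bZ_p[t]/(P_j(t))$-module type of $\cok(P_j(X_N))$, and not on $\deg(P_j)$ beyond its appearance through $q_j$ and $r_{q_j}(G_j)$. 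Summing these counts over all stages should yield the product formula of Theorem \ref{main3} with no degree restriction. If the direct approach proves intractable, a parallel route is the moment method: compute $\bE\!\left[\prod_j |\Surj_{\bZ_p[t]/(P_j(t))}(\cok(P_j(X)), H_j)|\right]$ for auxiliary test modules $H_j$ and invert, using the CRT decomposition to factor the moments across $j$.

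The main obstacle is controlling the interaction between $\bar{X}$ and the perturbation $Y$ when $\deg(P_j) \ge 3$. For $\deg(P_j) \le 2$, the first-order perturbation reduces to $p^N(\bar{X}Y + Y\bar{X} + cY)$ for a scalar $c$, whose image in $\Mat_n(\bF_p)$ admits a clean description via left--right linear algebra over $\bF_p[\bar{X}]$. For $\deg(P_j) \ge 3$, genuinely noncommutative sums such as $\bar{X}^2 Y + \bar{X} Y \bar{X} + Y \bar{X}^2$ appear, and these mix with the analogous perturbations at other indices in a way that is not obviously decoupled by the CRT isomorphism, which acts on $X$ but not on the full $\bZ_p$-bimodule structure of $\Mat_n(\bZ_p)$. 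Overcoming this likely requires a more intrinsic, module-theoretic or deformation-theoretic description of the locus $\{X : \cok(P_j(X)) \simeq G_j \text{ for all } j\}$, one that bypasses the explicit linear algebra and makes the factorization across $j$ manifest despite the noncommutative coupling.
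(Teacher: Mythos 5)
This statement is Conjecture \ref{conj3}, which the paper explicitly leaves open (the authors note that even Lee's independent proof of Conjecture \ref{conj} ``does not prove Theorem \ref{main2} or Theorem \ref{main3}, and Conjectures \ref{conj2} and \ref{conj3} remain open''). What you have written is a research plan, not a proof, and you say as much: the entire argument hinges on the ``key claim'' in your second paragraph --- that the number of lifts $Y$ producing a prescribed cokernel type at the next $p$-adic level depends only on the current $\bZ_p[t]/(P_j(t))$-module type of $\cok(P_j(X_N))$ --- and this claim is asserted, not proved. Your third paragraph then concedes that precisely this claim breaks down for $\deg(P_j)\ge 3$ and that overcoming it ``likely requires'' a description you do not supply. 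So there is no proof here to verify; the gap is the theorem itself.

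Two more specific remarks on your diagnosis. First, you locate the obstruction in the noncommutative first-order perturbation $\sum_{a+b=k-1} X^a Y X^b$, but in the paper's actual method the degree restriction enters elsewhere: Lemma \ref{Lee} converts $\cok(P(X))$ into $\cok_R(X-\bar t I_n)$ over $R=(\bZ/p^{N+1}\bZ)[t]/(P(t))$, and for $\deg(P)=2$ a general element $Z=X+\bar tY$ with $Z\equiv A-\bar tI_n \pmod p$ has $Y=pM-I_n$ invertible, so the normalization $X\mapsto -X(pM-I_n)^{-1}$ reduces every such $Z$ to the form $X-\bar tI_n$; for $\deg(P)=d\ge 3$ the element $X_0+\bar tX_1+\cdots+\bar t^{d-1}X_{d-1}$ has higher coefficients $X_2,\dots,X_{d-1}\equiv 0\pmod p$ that cannot be absorbed by any such one-sided multiplication, and no replacement for this step is known. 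Any serious attack on Conjecture \ref{conj3} has to confront that specific normalization problem (or avoid Lemma \ref{Lee} entirely). Second, the CRT isomorphism $\bZ_p[t]/(\prod_j P_j)\simeq\prod_j\bZ_p[t]/(P_j)$ does not by itself decouple the events across $j$, as you acknowledge; the paper handles the multi-$j$ coupling by a block-diagonalization of $\bar X$ over $\bF_p$ together with the inductive bijections $\Phi_k$, and that part of the argument already works for arbitrary degrees. The open content of Conjecture \ref{conj3} is therefore concentrated in the $l=1$, $\deg(P)\ge 3$ case, which your proposal does not resolve.
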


\begin{rmk} 
Conjecture \ref{conj2} implies Conjecture \ref{conj} in the same way that Theorem \ref{main2} implies Theorem \ref{main}, and Conjecture \ref{conj3} implies Conjecture \ref{conj2} in the same way that Theorem \ref{main3} implies to Theorem \ref{main2}. In recent communication with Jungin Lee, we were surprised to learn that Conjecture \ref{conj} can be proven with a different method, which is to appear in Lee's upcoming work \cite{LeePaper}. However, Lee's argument does not prove Theorem \ref{main2} or Theorem \ref{main3}, and Conjectures \ref{conj2} and \ref{conj3} remain open.
\end{rmk}

Friedman and Washington prove the special case of Theorem \ref{main3} where $l=1$ and $\deg(P_{1}) = 1$. Our proof of Theorem \ref{main3} is based on theirs but involves additional inputs related to the Smith normal form and the minors of a matrix. We study the conditions on the entries of a matrix over $\bZ/p^{N+1}\bZ$ that determine whether or not its cokernel is isomorphic to a particular finite module $G$.  We then apply elementary operations for block submatrices, which we summarize in Lemma \ref{elem} so that we can apply the $l = 1$ case multiple times to prove Theorem \ref{main3}.

There are several approaches to understanding the distribution of $\cok(X)$ for $X \in \Mat_n(\bZ_p)$ that have appeared since the original result of Friedman and Washington.  One approach that plays a major role in other work on cokernels of families of random $p$-adic matrices is the method of \emph{moments} where one studies the expected number of surjections from $\cok(X)$ to a fixed finite abelian $p$-group.  See \cite{Woo16}, \cite[Section 8]{EVW}, and  \cite{Woo17, Woo19} for more on this perspective.  Evans gives a Markov chain approach to this problem in \cite{Evans}.  Van Peski gives a new approach to this result in his work on cokernels of products of $p$-adic random matrices in \cite{VanPeski}.  It is not immediately clear how to adapt any of these approaches to study cases of Theorem \ref{main} where $l > 1$, or where $l =1$ and $\deg(P_{1}) = 2$.

\section{Theorem \ref{main3} implies Theorem \ref{main2} and Theorem \ref{main2} implies Theorem \ref{main}}\label{implication} 

We begin this section by recalling two key lemmas from \cite{CH}.

\begin{lem}[\cite{CH}, Lemma 4.3] \label{Haar} 
Let $l \in \bZ_{\ge 1}$ and $G_{1}, \dots, G_{l}$ be finite abelian $p$-groups. Choose any $N \in \bZ_{\geq 0}$ such that $p^{N}G_{1} = \cdots = p^{N}G_{l} = 0$. For any monic polynomials $f_{1}(t), \dots, f_{l}(t) \in \bZ_{p}[t]$ and $n \in \bZ_{\geq 1}$, we have
\[
\underset{X \in \Mat_{n}(\bZ_{p})}{\Prob}\left(\begin{array}{c}
\cok(f_{j}(X)) \simeq G_{j} \\
\text{ for } 1 \leq j \leq l
\end{array}\right) = \underset{X \in \Mat_{n}(\bZ/p^{N+1}\bZ)}{\Prob}\left(\begin{array}{c}
\cok(f_{j}(X)) \simeq G_{j} \\
\text{ for } 1 \leq j \leq l
\end{array}\right).
\]
\end{lem}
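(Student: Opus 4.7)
The plan is to reduce the lemma to a single-matrix claim: for any monic $f(t) \in \bZ_{p}[t]$, any finite abelian $p$-group $G$ with $p^{N}G = 0$, and any $X \in \Mat_{n}(\bZ_{p})$, the condition $\cok(f(X)) \simeq G$ depends only on the reduction $X \bmod p^{N+1}$. The joint event appearing in the lemma is an intersection of events of this form (one for each $j$), so it too depends only on $X \bmod p^{N+1}$. The equality of probabilities then follows from the standard fact that the reduction map $\Mat_{n}(\bZ_{p}) \twoheadrightarrow \Mat_{n}(\bZ/p^{N+1}\bZ)$ pushes the normalized Haar measure on $\Mat_{n}(\bZ_{p}) = \bZ_{p}^{n^{2}}$ forward to the uniform measure on $\Mat_{n}(\bZ/p^{N+1}\bZ)$.

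To prove the single-matrix claim, write $M := f(X)$ and use Smith normal form over the PID $\bZ_{p}$: choose $U, V \in \GL_{n}(\bZ_{p})$ with $UMV = \mathrm{diag}(p^{c_{1}}, \ldots, p^{c_{n}})$, where we allow $c_{i} = \infty$ to indicate a zero diagonal entry, so that $\cok(M) \simeq \bigoplus_{i} \bZ/p^{c_{i}}\bZ$ (with the convention $\bZ/p^{\infty}\bZ = \bZ_{p}$ when the cokernel is infinite). Reducing $U$, $V$, and $M$ modulo $p^{N+1}$ yields $\bar{U}, \bar{V} \in \GL_{n}(\bZ/p^{N+1}\bZ)$, since the determinants are units in $\bZ_{p}$ and hence in the quotient, and $\bar{U}\bar{M}\bar{V}$ is the diagonal matrix with entries $p^{c_{i}'}$ where $c_{i}' := \min(c_{i}, N+1)$. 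Consequently, $\cok(\bar{M})$ as a $\bZ/p^{N+1}\bZ$-module, and equivalently as an abelian group, is $\bigoplus_{i}\bZ/p^{c_{i}'}\bZ$.

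The hypothesis $p^{N}G = 0$ forces every elementary divisor of $G$ to have $p$-adic valuation at most $N$, and this is exactly strong enough to make both directions of the claim go through. If $\cok(M) \simeq G$, then each $c_{i} \leq N < N+1$, so $c_{i}' = c_{i}$ and $\cok(\bar{M}) \simeq G$. Conversely, if $\cok(\bar{M}) \simeq G$, then each $c_{i}' \leq N$, which rules out $c_{i} \geq N+1$ (since then $c_{i}' = N+1 > N$), so again $c_{i} = c_{i}'$ and $\cok(M) \simeq G$.

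The only slightly delicate step I anticipate is verifying that Smith normal form behaves well under reduction modulo $p^{N+1}$, so that the truncation $p^{c_{i}} \mapsto p^{\min(c_{i}, N+1)}$ is the correct recipe for the reduced Smith form. This is routine, and the conceptual content is the observation that $p^{N}G = 0$ is just strong enough to prevent this truncation from losing any information about the cokernel.
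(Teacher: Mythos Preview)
Your argument is correct. The paper does not supply its own proof of this lemma; it merely cites \cite[Lemma 4.3]{CH}, so there is no in-paper proof to compare against. Your Smith-normal-form reduction is exactly the standard way to establish this fact: the key point, which you identify cleanly, is that the hypothesis $p^{N}G = 0$ forces every elementary divisor exponent $c_i$ to be at most $N$, so the truncation $c_i \mapsto \min(c_i, N+1)$ loses no information in either direction. The only small item you leave implicit is that $f_j(X) \bmod p^{N+1} = f_j(\bar X)$ because reduction modulo $p^{N+1}$ is a ring homomorphism on $\Mat_n(\bZ_p)$; this is what connects your statement about $\bar M$ to the right-hand side of the lemma, which is phrased in terms of $f_j(\bar X)$ for $\bar X \in \Mat_n(\bZ/p^{N+1}\bZ)$.
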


The next result follows from \cite[Theorem 2.10]{CH} and \cite[Lemma 5.3]{CH}.

\begin{lem}\label{CL} 
Let $l \in \bZ_{\ge 1},\  r_{1}, \dots, r_{l} \in \bZ_{\geq 0}$ and $P_{1}(t), \dots, P_{l}(t) \in \bF_{p}[t]$ be distinct irreducible polynomials. We have
\[
\lim_{n \ra \infty}\underset{\bar{X} \in \Mat_{n}(\bF_{p})}{\Prob}\left(\begin{array}{c}
\dim_{\bF_{p}[t]/(P_{j}(t))}(\cok(P_{j}(\bar{X}))) = r_{j} \\
\text{ for } 1 \leq j \leq l
\end{array}\right) = \prod_{j=1}^{l} \left( \frac{p^{-r_{j}^{2}\deg(P_{j})}\prod_{i=1}^{\infty}(1 - p^{-i\deg(P_{j})})}{\prod_{i=1}^{r_{j}}(1 - p^{-i\deg(P_{j})})^{2}} \right).
\]
\end{lem}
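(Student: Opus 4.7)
The plan is to derive Lemma \ref{CL} by directly combining the two results of \cite{CH} that are cited immediately before its statement. I expect \cite[Theorem 2.10]{CH} to supply the joint limiting distribution of the modules $\cok(P_{j}(\bar X))$ over $\bF_{p}[t]/(P_{j}(t))$, including both (i) the independence as $n \to \infty$ across distinct irreducible $P_{j}$ and (ii) an explicit Cohen--Lenstra type formula for each marginal. I expect \cite[Lemma 5.3]{CH} to be the combinatorial identity that, once specialized to vector spaces over $\bF_{q_{j}}$, converts the Cohen--Lenstra weighting into the closed form appearing on the right-hand side of Lemma \ref{CL}.

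The first reduction is to observe that $\bF_{p}[t]/(P_{j}(t)) \simeq \bF_{q_{j}}$ is a field, so every finite $\bF_{p}[t]/(P_{j}(t))$-module is a vector space, and its isomorphism class is determined by its $\bF_{q_{j}}$-dimension. In particular, the event $\dim_{\bF_{q_{j}}}\cok(P_{j}(\bar X)) = r_{j}$ coincides with $\cok(P_{j}(\bar X)) \simeq \bF_{q_{j}}^{r_{j}}$, so no sum over isomorphism classes is required: applying \cite[Theorem 2.10]{CH} to the single tuple $(V_{1}, \dots, V_{l}) = (\bF_{q_{1}}^{r_{1}}, \dots, \bF_{q_{l}}^{r_{l}})$ directly yields the product structure over $j$ on the right-hand side. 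What remains is to identify each factor.

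For the final identification, I would substitute the standard formula
\[
|\GL_{r_{j}}(\bF_{q_{j}})| = q_{j}^{r_{j}^{2}}\prod_{i=1}^{r_{j}}(1 - q_{j}^{-i})
\]
for the automorphism group appearing in the Cohen--Lenstra expression, and then invoke \cite[Lemma 5.3]{CH} to supply the remaining factor of $\prod_{i=1}^{r_{j}}(1 - q_{j}^{-i})$ that produces the squared product in the denominator, together with the Euler-type numerator $\prod_{i=1}^{\infty}(1 - q_{j}^{-i}) = \prod_{i=1}^{\infty}(1 - p^{-i\deg(P_{j})})$. The main obstacle is purely bookkeeping: tracking which Euler-type factor arises from the automorphism group versus from the combinatorial identity, and checking that the exponents involving $\deg(P_{j})$ thread consistently through both inputs. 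No new conceptual ideas beyond \cite[Theorem 2.10]{CH} and \cite[Lemma 5.3]{CH} should be required.
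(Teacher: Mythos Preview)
Your proposal is correct and matches the paper's approach exactly: the paper's entire proof of this lemma is the single sentence ``The next result follows from \cite[Theorem 2.10]{CH} and \cite[Lemma 5.3]{CH},'' and your write-up is simply a reasonable unpacking of how those two inputs combine.
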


\begin{proof}[Proof that Theorem \ref{main3} implies Theorem \ref{main2} and Theorem \ref{main2} implies Theorem \ref{main}] Throughout the proof, we write $R_{j} := \bZ_{p}[t]/(P_{j}(t))$ and $q_{j} := p^{\deg(P_{j})}$ so that $\bF_{q_{j}} = \bF_{p}[t]/(P_{j}(t))$. By applying Lemma \ref{Haar}, we see that it is enough to prove the desired statements with $\Mat_n(\bZ/p^{N+1}\bZ)$ in place of $\Mat_{n}(\bZ_{p})$. Moreover, as explained in Remark \ref{sumranks}, we may assume that $n \geq \sum_{j=1}^{l}\dim_{\bF_{p}}(\cok(P_{j}(\bar{X}))$.

Theorem \ref{main3} implies that
\[
\#\left\{\begin{array}{c}
X \in \Mat_{n}(\bZ/p^{N+1}\bZ) : \\
\cok(P_{j}(X)) \simeq G_{j} \\
\text{for } 1 \leq j \leq l
\end{array}\right\} 
= p^{Nn^{2}}\left(\prod_{j=1}^{l}\frac{q_{j}^{r_{q_{j}}(G_{j})^{2}}\prod_{i=1}^{r_{q_{j}}(G_{j})}(1 - q_{j}^{-i})^{2}}{|\Aut_{R_{j}}(G_{j})|}\right)
\cdot \#\left\{\begin{array}{c}
\bar{X} \in \Mat_{n}(\bF_{p}) : \\
\cok(P_{j}(\bar{X})) \simeq G_{j}/pG_{j} \\
\text{for } 1 \leq j \leq l
\end{array}\right\}.
\]
Dividing by $p^{(N+1)n^{2}} = \# \Mat_{n}(\bZ/p^{N+1}\bZ)$ and noting that $\#\Mat_{n}(\bF_{p}) = p^{n^{2}}$, we have
\begin{align*}
&  \underset{X \in \Mat_{n}(\bZ/p^{N+1}\bZ)}{\Prob}\left(\begin{array}{c}
\cok(P_{j}(X)) \simeq G_{j} \\
\text{for } 1 \leq j \leq l
\end{array}\right) \\
&= \left(\prod_{j=1}^{l}\frac{q_{j}^{r_{q_{j}}(G_{j})^{2}}\prod_{i=1}^{r_{q_{j}}(G_{j})}(1 - q_{j}^{-i})^{2}}{|\Aut_{R_{j}}(G_{j})|}\right)
\frac{\#\left\{\begin{array}{c}
\bar{X} \in \Mat_{n}(\bF_{p}) : \\
\cok(P_{j}(\bar{X})) \simeq G_{j}/pG_{j} \\
\text{for } 1 \leq j \leq l
\end{array}\right\}}{\#\Mat_{n}(\bF_{p})} \\
& = \left(\prod_{j=1}^{l}\frac{q_{j}^{r_{q_{j}}(G_{j})^{2}}\prod_{i=1}^{r_{q_{j}}(G_{j})}(1 - q_{j}^{-i})^{2}}{|\Aut_{R_{j}}(G_{j})|}\right)
\underset{\bar{X} \in \Mat_{n}(\bF_{p})}{\Prob}\left(\begin{array}{c}
\cok(P_{j}(\bar{X})) \simeq G_{j}/pG_{j} \\
\text{for } 1 \leq j \leq l
\end{array}\right),
\end{align*}
so Theorem \ref{main2} follows.

Next, assume Theorem \ref{main2}. Applying Lemma \ref{CL} with $r_j = r_{q_j}(G_j)$ shows that
\[
\lim_{n \ra \infty}\underset{\bar{X} \in \Mat_{n}(\bF_{p})}{\Prob}\left(\begin{array}{c}
\cok(P_{j}(\bar{X})) \simeq G_{j}/pG_{j} \\
\text{ for } 1 \leq j \leq l
\end{array}\right) = \prod_{j=1}^{l} \left( \frac{q_{j}^{-r_{q_{j}}(G_{j})^{2}}\prod_{i=1}^{\infty}(1 - q_{j}^{-i})}{\prod_{i=1}^{r_{q_{j}}(G_{j})}(1 - q_{j}^{-i})^{2}} \right).
\]
Starting from the statement of Theorem \ref{main2}, applying Lemma \ref{Haar} and then taking $n \ra \infty$ implies that
\begin{align*}
\lim_{n \ra \infty}\underset{X \in \Mat_{n}(\bZ/p^{N+1}\bZ)}{\Prob}\left(\begin{array}{c}
\cok(P_{j}(X)) \simeq G_{j} \\
\text{for } 1 \leq j \leq l
\end{array}\right) &= \prod_{j=1}^{l}\frac{q_{j}^{r_{q_{j}}(G_{j})^{2}}\prod_{i=1}^{r_{q_{j}}(G_{j})}(1 - q_{j}^{-i})^{2}}{|\Aut_{R_{j}}(G_{j})|} \cdot \frac{q_{j}^{-r_{q_{j}}(G_{j})^{2}}\prod_{i=1}^{\infty}(1 - q_{j}^{-i})}{\prod_{i=1}^{r_{q_j}(G_{j})}(1 - q_{j}^{-i})^{2}} \\
&= \prod_{j=1}^{l}\frac{1}{|\Aut_{R_{j}}(G_{j})|}\prod_{i=1}^{\infty}(1 - q_{j}^{-i}),
\end{align*}
so Theorem \ref{main} follows.
\end{proof}

\section{Proof of Theorem \ref{main3} when $l = 1$}\label{sec_main_l1} 

In this section we prove Theorem \ref{main3} when $l = 1$. 

\subsection{Useful Lemmas} When $l = 1$ and $\deg(P_{1}) = 1$, we consider the following more general version of Theorem \ref{main3}.

\begin{lem}\label{l1deg1} 
Let $(R, \mf{m})$ be a complete DVR with finite residue field $R/\mf{m} = \bF_{q}$, let $G$ be a finite $R$-module, and choose any $N \in \bZ_{\geq 0}$ such that $\mf{m}^{N}G = 0$.  For any $\alpha \in R/\mf{m}^{N+1},\ n \in \bZ_{\geq 0}$, and $\bar{X} \in \Mat_{n}(\bF_{q})$ satisfying $\cok(\bar{X} - \bar{\alpha} I_n) \simeq G/\mf{m}G$, where $\bar{\alpha} \in \bF_{q} = R/\mf{m}$ is the image of $\alpha$ modulo $\mf{m}$, we have
\[
\#\left\{\begin{array}{c}
X \in \Mat_{n}(R/\mf{m}^{N+1}) : \\
R^{n}/(X - \alpha I_{n})R^{n} \simeq G \\
\text{and } X \equiv \bar{X} \pmod{\mf{m}}
\end{array}\right\} = q^{Nn^{2}}\frac{q^{r_{q}(G)^{2}}\prod_{i=1}^{r_{q}(G)}(1 - q^{-i})^{2}}{|\Aut_{R}(G)|},
\]
where $r_{q}(G) := \dim_{\bF_{q}}(G/\mf{m}G)$.
\end{lem}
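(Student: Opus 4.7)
The plan is to mirror Friedman and Washington's original argument by first reducing to a canonical form for $\bar{Y} := \bar{X} - \bar{\alpha} I_{n}$, then applying a Schur complement to isolate an $r \times r$ cokernel count, and finally establishing that count via a double-count involving surjections.

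I substitute $Y = X - \alpha I_{n}$ to reduce the count to that of $Y \in \Mat_{n}(R/\mf{m}^{N+1})$ with $\cok(Y) \simeq G$ and $Y \equiv \bar{Y} \pmod{\mf{m}}$. Since $\cok(\bar{Y}) \simeq G/\mf{m}G \simeq \bF_{q}^{r}$, the $\bF_{q}$-matrix $\bar{Y}$ has rank $n - r$, so I can pick $\bar{U}, \bar{V} \in \GL_{n}(\bF_{q})$ with $\bar{U}\bar{Y}\bar{V} = \begin{pmatrix} I_{n-r} & 0 \\ 0 & 0 \end{pmatrix}$ and lift them to $U, V \in \GL_{n}(R/\mf{m}^{N+1})$ (any lift is invertible, since $R/\mf{m}^{N+1}$ is local with residue field $\bF_{q}$). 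The cokernel-preserving bijection $Y \mapsto UYV$ reduces the count to the one in which $Y$ has reduction equal to the canonical block matrix, which is manifestly independent of $\bar{X}$. Writing $Y = \begin{pmatrix} A & B \\ C & D \end{pmatrix}$ in blocks of sizes $(n-r)$ and $r$, the reduction condition forces $A \equiv I_{n-r} \pmod{\mf{m}}$ (so $A$ is invertible) and $B, C, D \equiv 0 \pmod{\mf{m}}$, while the Schur complement identity gives $\cok(Y) \simeq \cok(D - CA^{-1}B)$. Because $CA^{-1}B \equiv 0 \pmod{\mf{m}^{2}}$, the map $D \mapsto D' := D - CA^{-1}B$ is a self-bijection of $\{D \equiv 0 \pmod{\mf{m}}\}$, so for each of the $q^{N((n-r)^{2} + 2r(n-r))}$ admissible choices of $(A, B, C)$, the count over $D$ equals $c_{G} := \#\{D' \in \Mat_{r}(R/\mf{m}^{N+1}) : \cok(D') \simeq G\}$. (The condition $D' \equiv 0 \pmod{\mf{m}}$ is automatic here, since otherwise the Smith normal form of $D'$ would contain a unit entry, forcing $\cok(D')$ to need fewer than $r = r_{q}(G)$ generators.)

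The main technical obstacle is to prove
\[
c_{G} = \frac{q^{(N+1)r^{2}} \prod_{i=1}^{r}(1 - q^{-i})^{2}}{|\Aut_{R}(G)|}.
\]
I would double-count pairs $(D', \phi)$ with $\phi: \cok(D') \xrightarrow{\sim} G$: one side is $c_{G} \cdot |\Aut_{R}(G)|$, while such a pair is equivalent to a surjection $\psi: R^{r} \twoheadrightarrow G$ together with an ordered $r$-tuple of elements of $\ker \psi$ generating it, serving as the columns of $D'$. Every surjection $R^{r} \twoheadrightarrow G$ reduces modulo $\mf{m}$ to an isomorphism $\bF_{q}^{r} \xrightarrow{\sim} G/\mf{m}G$ and so is minimal, and any two such surjections differ by an automorphism of $R^{r}$; hence all their kernels are isomorphic to $K := \bigoplus_{i=1}^{r} R/\mf{m}^{N+1-\lambda_{i}}$, where $G \simeq \bigoplus_{i=1}^{r} R/\mf{m}^{\lambda_{i}}$ is the elementary divisor decomposition. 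Reducing modulo $\mf{m}G$ and $\mf{m}K$ respectively and counting bases of the resulting $r$-dimensional $\bF_{q}$-vector spaces gives $|\Surj(R^{r}, G)| = q^{r\sum_{i}\lambda_{i}}\prod_{i=1}^{r}(1 - q^{-i})$ and that $K$ has $q^{r^{2}(N+1) - r\sum_{i}\lambda_{i}}\prod_{i=1}^{r}(1 - q^{-i})$ generating $r$-tuples, whose product is $q^{(N+1)r^{2}}\prod_{i=1}^{r}(1 - q^{-i})^{2}$. Combining with the block count and simplifying via $N((n-r)^{2} + 2r(n-r)) + (N+1)r^{2} = Nn^{2} + r^{2}$ yields the claimed formula.
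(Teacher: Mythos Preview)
Your proof is correct. Steps (1)--(2) are essentially identical to the paper's: both reduce $\bar{X}-\bar\alpha I_n$ (equivalently $\bar{Y}$) to a block form with an invertible $(n-r)\times(n-r)$ block and a zero $r\times r$ block, then use a Schur complement / block elimination to reduce to counting $r\times r$ matrices $D'\equiv 0\pmod{\mf m}$ with $\cok(D')\simeq G$. (The paper splits the invertible block into two pieces and mentions the Jordan block sizes $m_1,\dots,m_r$, but these play no role; your two-block reduction is cleaner.)

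The genuine difference is in Step (3). The paper computes $c_G$ by expanding $uA = uA_1+u^2A_2+\cdots$ and tracking, via Smith normal form and minors, exactly which rank conditions on the successive $A_{e_k},A_{e_{k-1}},\dots$ force the correct invariant factors; this is assembled with Lemma~\ref{rank} and Lemma~\ref{Aut}. You instead double-count pairs $(D',\phi)$ with $\phi\colon\cok(D')\xrightarrow{\sim}G$ as pairs (surjection $\psi\colon (R/\mf m^{N+1})^r\twoheadrightarrow G$, generating $r$-tuple of $\ker\psi$), and evaluate both factors by reducing mod $\mf m$ and counting bases. Your route is closer in spirit to Friedman--Washington's original argument (cf.\ Remark~\ref{FW_remark}) and is shorter and more conceptual; the paper's route is deliberately more hands-on because the entrywise conditions it extracts are reused later in the proof of Theorem~\ref{main3} for general $l$.
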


Lemma \ref{l1deg1} can be deduced from the arguments introduced by Friedman and Washington in \cite{FW} although they only discuss the case $R = \bZ_{p}$. In this section, we give a different proof of this result. We need to apply this more general version of Lemma \ref{l1deg1} in our proof of Theorem \ref{main3} when at least one of the polynomials $P_j(t)$ has degree $2$. 

The following lemma, which we learned from Jungin Lee \cite{LeePaper}, is crucial to our proof of Theorem \ref{main3} when at least one of the polynomials $P_j(t)$ has degree $2$.

\begin{lem}[Lee]\label{Lee} 
Given $m \in \bZ_{\geq 0}$, let $P(t) \in (\bZ/p^{m}\bZ)[t]$ be a monic polynomial of degree $d$.  Consider 
\[
R := (\bZ/p^{m}\bZ)[t]/(P(t)) = \bZ/p^{m}\bZ \op \bar{t} (\bZ/p^{m}\bZ) \op \cdots \op \bar{t}^{d-1} (\bZ/p^{m}\bZ).
\]
Fix $X \in \Mat_{n}(\bZ/p^{m}\bZ)$. The map
\[
\psi \colon \frac{(\bZ/p^{m}\bZ)^{n}}{P(X)(\bZ/p^{m}\bZ)^{n}} \ra \cok_{R}(X - \bar{t}I_{n}) := \frac{R^{n}}{(X - \bar{t}I_{n})R^{n}}
\]
defined by $\psi([v]) = [v]$, where $v \in (\bZ/p^{m}\bZ)^{n}$, is an $R$-linear isomorphism.
\end{lem}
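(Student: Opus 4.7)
The plan is to construct an explicit two-sided inverse to $\psi$, using that $R$ is free of rank $d$ over $\bZ/p^{m}\bZ$ with basis $1, \bar{t}, \ldots, \bar{t}^{d-1}$. The source $M := (\bZ/p^{m}\bZ)^{n}/P(X)(\bZ/p^{m}\bZ)^{n}$ inherits an $R$-module structure with $\bar{t}$ acting as $X$, because $P(X)$ annihilates $M$. The map $\psi$ is well-defined thanks to the factorization $P(X) - P(\bar{t}I_{n}) = (X - \bar{t}I_{n}) Q(X, \bar{t})$ for some matrix polynomial $Q$ (which exists because $X$ and $\bar{t}I_{n}$ commute, so each $X^{k} - (\bar{t}I_{n})^{k}$ is divisible by $X - \bar{t}I_{n}$), combined with $P(\bar{t}I_{n}) = P(\bar{t})I_{n} = 0$ in $R^{n}$: this shows $P(X)u = (X - \bar{t}I_{n}) Q(X, \bar{t}) u$ lies in the image of $X - \bar{t}I_{n}$ for every $u \in (\bZ/p^{m}\bZ)^{n}$. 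The $R$-linearity of $\psi$ then follows from $Xv = \bar{t}v + (X - \bar{t}I_{n})v$, which gives $[Xv] = [\bar{t}v]$ in the target.

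For the inverse, write $P(t) = t^{d} + \sum_{j=0}^{d-1} a_{j} t^{j}$ and define
\[
\phi \colon R^{n} \to M, \qquad \phi\left(\sum_{i=0}^{d-1}\bar{t}^{i} v_{i}\right) := \left[\sum_{i=0}^{d-1} X^{i} v_{i}\right],
\]
where $M$ is viewed as an $R$-module as above. Using the relation $\bar{t}^{d} = -\sum_{j=0}^{d-1} a_{j} \bar{t}^{j}$, I would expand $(X - \bar{t}I_{n})v$ for a general $v = \sum_{i=0}^{d-1} \bar{t}^{i} v_{i} \in R^{n}$ in the basis $\{\bar{t}^{i}\}$ and apply $\phi$; a telescoping calculation collapses the result to $\phi((X - \bar{t}I_{n})v) = P(X) v_{d-1}$, which is zero in $M$. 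Hence $\phi$ descends to an $R$-linear map $\bar{\phi} \colon \cok_{R}(X - \bar{t}I_{n}) \to M$, and both compositions $\bar{\phi}\circ\psi$ and $\psi\circ\bar{\phi}$ fix the class of every $v \in (\bZ/p^{m}\bZ)^{n}$. Such classes generate both modules (on the target side because $\bar{t}^{i}v \equiv X^{i}v$ modulo $(X - \bar{t}I_{n})R^{n}$, via the telescoping identity $X^{i} - (\bar{t}I_{n})^{i} = (X - \bar{t}I_{n}) \sum_{k} X^{i-1-k}(\bar{t}I_{n})^{k}$), so $\psi$ is an $R$-linear isomorphism.

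The main obstacle is the telescoping identity $\phi((X - \bar{t}I_{n})v) = P(X) v_{d-1}$: it amounts to careful bookkeeping with the coefficients $a_{j}$ and the basis powers of $\bar{t}$, invoking the defining relation $P(\bar{t}) = 0$ exactly once to re-express the single $\bar{t}^{d}$ term produced by multiplying $\bar{t}$ into the highest basis vector. Everything else in the argument is formal.
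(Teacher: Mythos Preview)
Your proposal is correct and follows essentially the same approach as the paper. The paper proves injectivity and surjectivity of $\psi$ separately, while you package the same two computations as the construction of an explicit two-sided inverse: your map $\phi$ is precisely the paper's surjectivity witness $[v_{0} + \bar{t}v_{1} + \cdots] \mapsto [v_{0} + Xv_{1} + \cdots]$, and your telescoping identity $\phi((X-\bar{t}I_{n})v) = [P(X)v_{d-1}]$ is the paper's injectivity calculation run in reverse.
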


\begin{proof}  
Since $P(\bar{t}) = 0$ in $R = (\bZ/p^{m}\bZ)[t]/(P(t))$, we have $P(x) = (x - \bar{t})Q(x)$ for some $Q(x) \in R[x]$. For $w \in (\bZ/p^{m}\bZ)^{n}$, we have
\[
P(X)w = (X - \bar{t}I_{n})Q(X)w
\]
in $R^{n}$, so $\psi$ is well-defined. Since 
\[
\psi(\bar{t}[v]) = \psi([Xv]) = [X v] = [\bar{t}v] = \bar{t}[v],
\]
we see that $\psi$ is $R$-linear.

Suppose $\psi([v]) = 0$ in $R^n/(X - \bar{t}I_{n})R^n$.  Let $v \in (\bZ/p^{m}\bZ)^{n}$ be any representative of $[v]$.  There exist $w_{0}, \dots, w_{d-1} \in (\bZ/p^{m}\bZ)^{n}$ such that when considered as an element of $R^n$,
\begin{align*}
v &= (X - \bar{t}I_{n})(w_{0} + \bar{t}w_{1} + \cdots + \bar{t}^{d-1}w_{d-1}) \\
&= Xw_{0} + \bar{t}Xw_{1} + \dots + \bar{t}^{d-1}Xw_{d-1} - (\bar{t}w_{0} + \bar{t}^{2}w_{1} + \cdots + \bar{t}^{d}w_{d-1}) \\
&= Xw_{0} + \bar{t}(Xw_{1} - w_{0}) + \bar{t}^{2}(Xw_{2} - w_{1}) + \cdots + \bar{t}^{d-1}(Xw_{d-1} - w_{d-2}) - \bar{t}^{d}w_{d-1}.
\end{align*}
Writing $P(t) =  t^{d} + a_{d-1}t^{d-1} + \cdots + a_{1}t + a_{0}$, implies that as an element of $R^n$,
\[
v = Xw_{0} + a_{0}w_{d-1} + \bar{t}(Xw_{1} -  w_{0} + a_{1}w_{d-1}) + \bar{t}^{2}(Xw_{2} - w_{1} + a_{2}w_{d-1}) + \cdots + \bar{t}^{d-1}(Xw_{d-1} - w_{d-2} + a_{d-1}w_{d-1}).
\]
Since $v \in (\bZ/p^{m}\bZ)^{n}$, the decomposition $R = \bZ/p^{m}\bZ \op \bar{t} (\bZ/p^{m}\bZ) \op \cdots \op \bar{t}^{d-1} (\bZ/p^{m}\bZ)$ implies that as elements of $(\bZ/p^{m}\bZ)^{n}$,
\begin{align*}
&v = Xw_{0} + a_{0}w_{d-1}, \\
&w_{0} = Xw_{1} +  a_{1}w_{d-1}, \\
&w_{1} = Xw_{2} +  a_{2}w_{d-1}, \\
&\hspace{1cm} \cdots \\
&w_{d-2} =  Xw_{d-1} +  a_{d-1}w_{d-1}.
\end{align*}
Therefore, as an element of $(\bZ/p^{m}\bZ)^{n}$,
\begin{align*}
v &= Xw_{0} + a_{0}w_{d-1} \\
&= X^{2}w_{1} + a_{1}Xw_{d-1} + a_{0}w_{d-1} \\
&= X^{3}w_{2} + a_{2}X^{2}w_{d-1} + a_{1}Xw_{d-1} + a_{0}w_{d-1} \\
&\hspace{1cm} \cdots \\
&= X^{d-1}w_{d-2} + a_{d-2}X^{d-2}w_{d-1} +  a_{d-3}X^{d-3}w_{d-1} + \cdots + a_{1}Xw_{d-1} + a_{0}w_{d-1} \\
&= X^{d}w_{d-1} + a_{d-1}X^{d-1}w_{d-1} + a_{d-2}X^{d-2}w_{d-1} +  a_{d-3}X^{d-3}w_{d-1}  + \cdots + a_{1}Xw_{d-1} + a_{0}w_{d-1} \\
&= (X^{d} + a_{d-1}X^{d-1} + a_{d-2}X^{d-2} +  a_{d-3}X^{d-3} + \cdots + a_{1}X + a_{0})w_{d-1} = P(X)w_{d-1}.
\end{align*}
This means $[v] = 0$ in $(\bZ/p^{m}\bZ)^{n}/P(X)(\bZ/p^{m}\bZ)^{n}$, and we conclude that $\psi$ is injective.

Given any $v_{0} + \bar{t}v_{1} + \cdots + \bar{t}^{d-1}v_{d-1} \in R^{n}$, where each $v_{i} \in (\bZ/p^{m}\bZ)^{n}$, we have
\[[
v_{0} + Xv_{1} + \cdots + X^{d-1}v_{d-1}] = [v_{0} + \bar{t}v_{1} + \cdots + \bar{t}^{d-1}v_{d-1}]
\]
in $ R^{n}/(X - \bar{t}I_{n})R^{n}$. This shows that $\psi$ is surjective.
\end{proof}

\subsection{Proof of Theorem \ref{main3} when $l = 1$}\label{l1} 
We now use Lemma \ref{l1deg1} to prove Theorem \ref{main3} when $l = 1$.  Let $P(t) \in \bZ_{p}[t]$ be a monic polynomial of degree at most $2$ whose reduction modulo $p$ is irreducible in $\bF_{p}[t]$. Let $q := p^{\deg(P)}$.  We want to show that for any finite module $G$ over $\bZ_{p}[t]/(P(t)),\ N \in \bZ_{\geq 0}$ such that $p^{N+1}G = 0$, and $\bar{X} \in \Mat_{n}(\bF_{p})$ with $r_{q}(G) := \dim_{\bF_{q}}(G/pG) = \dim_{\bF_{q}}(\cok(P(\bar{X})))$, we have
\[
\#\left\{\begin{array}{c}
X \in \Mat_{n}(\bZ/p^{N+1}\bZ) : \\
\cok(P(X)) \simeq G \\
\text{and } X \equiv \bar{X} \pmod{p}
\end{array}\right\} = p^{Nn^{2}}\frac{q^{r_{q}(G)^{2}}\prod_{i=1}^{r_{q}(G)}(1 - q^{-i})^{2}}{|\Aut_{\bZ_{p}[t]/(P(t))}(G)|}.
\]

\begin{proof} 
Lemma \ref{l1deg1} with $R = \bZ_p$ gives the result we need for $\deg(P) = 1$, so we suppose that $\deg(P) = 2$. In this case, we have $q = p^{2}$. For ease of notation, we write $P(t)$ for the image of $P(t)$ in $(\bZ/p^{N+1}\bZ)[t]$. Let $A \in \Mat_{n}(\bF_{p})$ satisfy
\[
\dim_{\bF_{q}}(\cok(P(A))) = r_{q}(G).
\]
For the rest of the proof, let $R = (\bZ/p^{N+1}\bZ)[t]/(P(t))$. Applying Lemma \ref{Lee} shows that 
\[
\#\left\{\begin{array}{c}
X \in \Mat_{n}(\bZ/p^{N+1}\bZ) \colon \\
\cok(P(X)) \simeq G \\
\text{and } X \equiv A \pmod{p}
\end{array}\right\} = \#\left\{\begin{array}{c}
X \in \Mat_{n}(\bZ/p^{N+1}\bZ) \colon \\
\cok_{R}(X - \bar{t}I_{n}) \simeq G \\
\text{and } X \equiv A \pmod{p}
\end{array}\right\}.
\]
We claim that the size of this set is independent of the choice of $A$.

The decomposition $R  = (\bZ/p^{N+1}\bZ) \oplus \bar{t}(\bZ/p^{N+1}\bZ)$ gives a decomposition $\Mat_{n}(R) = \Mat_{n}(\bZ/p^{N+1}\bZ) \oplus \bar{t}\Mat_{n}(\bZ/p^{N+1}\bZ)$.  This decomposition shows that
\[
c_{N,n} := \#\left\{\begin{array}{c}
Z = X + \bar{t}Y \in \Mat_{n}(R)\colon \\
\cok_{R}(Z) \simeq G \\
\text{and } Z \equiv A - \bar{t}I_{n} \pmod{p}
\end{array}\right\} = \#\left\{\begin{array}{c}
(X, Y) \in \Mat_{n}(\bZ/p^{N+1}\bZ)^{2} \colon \\
\cok_{R}(X + \bar{t}Y) \simeq G, \\
X \equiv A \text{ and } Y \equiv -I_{n} \pmod{p} 
\end{array}\right\}.
\]
By Lemma \ref{l1deg1}, this expression is independent of the choice of $A$. We have
\[
c_{N,n} = \#\left\{\begin{array}{c}
(X, Y) \in \Mat_{n}(\bZ/p^{N+1}\bZ)^{2} \colon \\
\cok_{R}(X + \bar{t}Y) \simeq G, \\
X \equiv A \text{ and } Y \equiv -I_{n} \pmod{p} 
\end{array}\right\} 
= \sum_{pM \in p\Mat_{n}(\bZ/p^{N+1}\bZ)}\#\left\{\begin{array}{c}
X \in \Mat_{n}(\bZ/p^{N+1}\bZ) \colon \\
\cok_{R}(X + \bar{t}(pM - I_{n})) \simeq G \\
\text{and } X \equiv A \pmod{p} 
\end{array}\right\}.
\]
For any $pM \in p\Mat_{n}(\bZ/p^{N+1}\bZ)$, we have a bijection
\[
\left\{\begin{array}{c}
X' \in \Mat_{n}(\bZ/p^{N+1}\bZ) \colon \\
\cok_{R}(X' + \bar{t}(pM - I_{n})) \simeq G \\
\text{and } X' \equiv A \pmod{p} 
\end{array}\right\} \lra \left\{\begin{array}{c}
X \in \Mat_{n}(\bZ/p^{N+1}\bZ) \colon \\
\cok_{R}(X - \bar{t}I_{n}) \simeq G \\
\text{and } X \equiv A \pmod{p} 
\end{array}\right\}
\]
given by $X' \mapsto X = -X'(pM - I_{n})^{-1}$.  Since $|p\Mat_{n}(\bZ/p^{N+1}\bZ)| = p^{Nn^{2}}$, we have
\[
c_{N,n} = p^{Nn^{2}}\#\left\{\begin{array}{c}
X \in \Mat_{n}(\bZ/(p^{N+1})) \colon\\
\cok_{R}(X - \bar{t}I_{n}) \simeq G \\
\text{and } X \equiv A \pmod{p} 
\end{array}\right\},
\]
and so 
\[
c_{N,n}p^{-Nn^{2}} = \#\left\{\begin{array}{c}
X \in \Mat_{n}(\bZ/p^{N+1}\bZ) \colon\\
\cok(P(X)) \simeq G \\
\text{and } X \equiv A \pmod{p}
\end{array}\right\}.
\]
The quantity on the right-hand side is independent of the choice of $A$, as claimed. Moreover, Lemma \ref{l1deg1} implies
\[
c_{N,n} = p^{2Nn^{2}}\frac{q^{r_{q}(G)^{2}}\prod_{i=1}^{r_{q}(G)}(1 - q^{-i})^{2}}{|\Aut_{\bZ_{p}[t]/(P(t))}(G)|},
\]
because $q = p^{2}$. Therefore,
\[
\#\left\{\begin{array}{c}
X \in \Mat_{n}(\bZ/(p^{N+1})) \colon\\
\cok(P(X)) \simeq G \\
\text{and } X \equiv A \pmod{p} 
\end{array}\right\} = c_{N,n}p^{-Nn^{2}} =  p^{Nn^{2}}\frac{q^{r_{q}(G)^{2}}\prod_{i=1}^{r_{q}(G)}(1 - q^{-i})^{2}}{|\Aut_{\bZ_{p}[t]/(P(t))}(G)|}.
\]
\end{proof}

\subsection{Outline of the proof of Lemma \ref{l1deg1}}\label{proof_outline} 
In the rest of this section, we prove Lemma \ref{l1deg1}. Without loss of generality, we may assume $\alpha = 0$. We show that given 
\begin{itemize}
	\item a finite $R$-module $G$,
	\item $N \in \bZ_{\geq 0}$ such that $\mf{m}^{N}G = 0$, and
	\item $\bar{X} \in \Mat_{n}(\bF_{q})$ such that $\cok(\bar{X}) \simeq G/\mf{m}G$ as $\bF_{q}$-vector spaces,
\end{itemize}
we have
\begin{equation}\label{main_count}
\#\left\{\begin{array}{c}
X \in \Mat_{n}(R/\mf{m}^{N+1}) \colon\\
\cok(X) \simeq G \\
\text{and } X \equiv \bar{X} \pmod{\mf{m}}
\end{array}\right\} = q^{Nn^{2}}\frac{q^{r_{q}(G)^{2}}\prod_{i=1}^{r_{q}(G)}(1 - q^{-i})^{2}}{|\Aut_{R}(G)|}.
\end{equation}

In order to give the outline of our argument, we recall some linear algebra related to $\bar{X} \in \Mat_{n}(\bF_{q})$.  We can give $\bF_{q}^{n}$ an $\bF_{q}[t]$-module structure by defining the $t$-action as left multiplication by $\bar{X}$ on the $n \times 1$ matrices over $\bF_{q}$. With this structure in mind, we may write $\bar{X}$ to also mean the corresponding $\bF_{q}[t]$-module, namely the $\bF_{q}$-vector space $\bF_{q}^{n}$ together with the action of $\bar{X}$. Given any irreducible polynomial $P(t) \in \bF_{q}[t]$, we have 
\[
\cok(P(\bar{X})) \simeq \ker(P(\bar{X})) \simeq \bar{X}[P^{\infty}]/P\bar{X}[P^{\infty}]
\]
as $\bF_{q}$-vector spaces, where $\bar{X}[P^{\infty}]$ denotes the $P$-part of the $\bF_{q}[t]$-module $\bar{X}$. For ease of notation, throughout the proof we let $r = r_q(G)$.  Since $\cok(\bar{X}) \simeq G/\mf{m}G \simeq \bF_{q}^{r}$, we have
\[
\bar{X}[t^{\infty}] \simeq \bF_{q}[t]/(t^{m_{1}}) \times \cdots \times \bF_{q}[t]/(t^{m_{r}})
\]
as $\bF_{q}[t]$-modules, where $m_1 \ge m_2 \ge \cdots \ge m_r \ge 1$. In other words, the matrix $\bar{X}$ has $r$ Jordan blocks corresponding to the eigenvalue $0$ with sizes $m_{1}, \dots, m_{r}$. 

Our argument is divided into three main steps:
\begin{enumerate}
\item 
We prove that it is enough to show that \eqref{main_count} holds for $\bar{X} \in \Mat_{n}(\bF_{q})$ of the special form
\[
\bar{X} = \begin{bmatrix}
0 & 0 & 0 \\
0 & \id & 0 \\
0 & 0 & \bar{M}
\end{bmatrix},
\]
where
\begin{itemize}
	\item $\id = I_{m_{1} + \cdots + m_{r} - r}$, the $(m_{1} + \cdots + m_{r} - r) \times (m_{1} + \cdots + m_{r} - r)$ identity matrix, and
	\item $\bar{M} \in \GL_{n - (m_{1} + \cdots + m_{r})}(\bF_{q})$.
\end{itemize}

\item For $\bar{X}$ of this form, we prove that the left-hand side of \eqref{main_count} is 
\[
q^{N(n^{2} - r^{2})}\#\{uA \in u\Mat_{r}(R/\mf{m}^{N+1}) \colon\cok(uA) \simeq G\},
\]
where $u$ is a uniformizer of $R$ (i.e., a generator for its maximal ideal, so $\mf{m} = (u) = uR$).

\item We prove that
\[
\#\{uA \in u\Mat_{r}(R/\mf{m}^{N+1}) \colon \cok(uA) \simeq G\}  = \frac{q^{Nr^{2} + r^{2}}\prod_{i=1}^{r}(1 - q^{-i})^{2}}{|\Aut_{R}(G)|}.
\]
\end{enumerate}

\begin{rmk}\label{FW_remark}
Friedman and Washington prove Lemma \ref{l1deg1} in \cite[p.236]{FW}.  Their proof is similar to the one outlined above.  They reduce the statement to the count given in the third main step.  Then they  note that $\cok(uA') \simeq H$ if and only if $\cok(A') \simeq uH$.  Finally, they compute 
\[
\#\{A' \in \Mat_{r}(R/\mf{m}^{N+1}) \colon \cok(A') \simeq uH\}.
\]
using \cite[Proposition 1]{FW}.  

The final part of our argument is longer but works more directly with the entries of the matrices we consider.  In particular, we describe conditions on a matrix over $R/\mf{m}^{N+1}$ that determine whether or not its cokernel is isomorphic to a particular module $G$.  We give a full proof of Lemma \ref{l1deg1} because several of the pieces are important for the proof of the general case of Theorem \ref{main3}.  We also believe that the techniques in our proof may be useful for other problems about cokernels of families of random $p$-adic matrices.
\end{rmk}

We now carry out the first part of our three step strategy.
\begin{proof}[Proof of Lemma \ref{l1deg1}: Step (1)]

By switching rows and columns of $\bar{X}$, there exist $\bar{Q}_{1}, \bar{Q}_{2} \in \GL_{n}(\bF_{q})$ such that
\[
\bar{Q}_{1}\bar{X}\bar{Q}_{2} = \begin{bmatrix}
0 & 0 & 0 \\
0 & \id & 0 \\
0 & 0 & \bar{M}
\end{bmatrix},
\]
where 
\begin{itemize}
	\item $\id$ is the $(m_{1} + \cdots + m_{r} - r) \times (m_{1} + \cdots + m_{r} - r)$ identity matrix, and
	\item $\bar{M} \in \GL_{n - (m_{1} + \cdots + m_{r})}(\bF_{q})$.\footnote{In \cite[p.234, (11)]{FW}, these $\bar{Q}_{1}$ and $\bar{Q}_{2}$ are taken to be inverses of each other, but it is not possible to find such matrices in general. For example, the $2 \times 2$ matrix $\begin{bmatrix}0 & 1 \\ 0 & 0\end{bmatrix}$ is not similar to any matrix of the form $\begin{bmatrix}0 & b \\ 0 & d\end{bmatrix}$ with $d \neq 0$. Nevertheless, this is an easy fix.}
\end{itemize}

Fix lifts $Q_{1}, Q_{2} \in \Mat_{n}(R/\mf{m}^{N+1})$ of $\bar{Q}_{1}, \bar{Q}_{2}$, meaning $Q_{i} \equiv \bar{Q}_{i} \pmod{\mf{m}}$ for $i \in\{1, 2\}$. Since $\bar{Q}_{1}, \bar{Q}_{2} \in \GL_{n}(\bF_{q})$, we have $Q_{1}, Q_{2} \in \GL_{n}(R/\mf{m}^{N+1})$. Fix a lift $M \in \GL_{n - (m_{1} + \cdots + m_{r})}(R/\mf{m}^{N+1})$ of $\bar{M}$. 

For any lift $X \in \Mat_{n}(R/\mf{m}^{N+1})$ of $\bar{X}$, note that $Q_{1}XQ_{2} \in \Mat_{n}(R/\mf{m}^{N+1})$ is a lift of $\bar{Q}_{1}\bar{X}\bar{Q}_{2}$. On the other hand, if $Y \in \Mat_{n}(R/\mf{m}^{N+1})$ is a lift of $\bar{Q}_{1}\bar{X}\bar{Q}_{2}$, then $Q_{1}^{-1}YQ_{2}^{-1} \in \Mat_{n}(R/\mf{m}^{N+1})$ is a lift of $\bar{X}$. This gives a bijection between the lifts of $\bar{X}$ to $\Mat_{n}(R/\mf{m}^{N+1})$ and the lifts of $\bar{Q}_{1}\bar{X}\bar{Q}_{2}$ to $\Mat_{n}(R/\mf{m}^{N+1})$. Hence, the number of lifts $X \in \Mat_{n}(R/\mf{m}^{N+1})$ of $\bar{X}$ is equal to the number of lifts $Y = Q_{1}XQ_{2} \in \Mat_{n}(R/\mf{m}^{N+1})$ of $\bar{Q}_{1}\bar{X}\bar{Q}_{2}$. Since $\cok(Y) = \cok(Q_{1}XQ_{2}) \simeq \cok(X)$, it is enough to count the lifts $Y = Q_{1}XQ_{2} \in \Mat_{n}(R/\mf{m}^{N+1})$ of $\bar{Q}_{1}\bar{X}\bar{Q}_{2}$ with $\cok(Y) \simeq G$.
\end{proof}

\subsection{Elementary operations for block submatrices} 
Before we carry out the second main step of the proof of Lemma \ref{l1deg1}, we recall some material about elementary row and column operations for block submatrices.  

Let $R$ be a commutative ring and $X \in \Mat_n(R)$.  Each of the following three elementary row operations corresponds to left multiplication by a matrix in $\GL_{n}(R)$:
\begin{itemize}
\item Exchange the $i$-th row $X_{(i)}$ with the $j$-th row $X_{(j)}$ for any distinct $i,j \in [1,n]$;
\item Multiply $X_{(i)}$ by a unit in $R$  for any $i \in [1,n]$;
\item Replace $X_{(i)}$ with $X_{(i)} + a X_{(j)}$ for any $a \in R$ and any distinct $i,j\in [1,n]$.
\end{itemize}
Likewise, each of the following three elementary column operations corresponds to right multiplication by a matrix in $\GL_{n}(R)$:
\begin{itemize}
\item Exchange the $i$-th column $X^{(i)}$ with the $j$-th column $X^{(j)}$  for any distinct $i,j \in [1,n]$;
\item Multiply $X^{(i)}$ by a unit in $R$ for any $i\in [1,n]$;
\item Replace $X^{(i)}$ with $X^{(i)} + a X^{(j)}$ for any $a \in R$ and any distinct $i,j \in [1,n]$.
\end{itemize}
Note that elementary (row or column) operations do not change the isomorphism class of $\cok(X)$.

\hspace{3mm} A key technique in the proof Lemma \ref{l1deg1} is an analogous method for elementary operations with block submatrices of $X$. Let $n$ be a positive integer and $n_{1}, \dots, n_{s} \geq 1$ satisfy $n_{1} + \cdots + n_{s} = n$. We subdivide $X \in \Mat_{n}(R)$ into block submatrices where $X_{[i,j]}$ is an $n_{i} \times n_{j}$ matrix over $R$:
\[
X = \begin{bmatrix}
X_{[1,1]} & X_{[1,2]} & \cdots & X_{[1,s-1]} & X_{[1,s]} \\
X_{[2,1]} & X_{[2,2]} & \cdots & X_{[2,s-1]} & X_{[2,s]} \\
\vdots & \vdots & \cdots & \vdots & \vdots \\
X_{[s-1,1]} & X_{[s-1,2]} & \cdots & X_{[s-1,s-1]} & X_{[s-1,s]}  \\
X_{[s,1]} & X_{[s,2]} & \cdots & X_{[s, s-1]} & X_{[s,s]}
\end{bmatrix}.
\]

\begin{lem}[Elementary operations for block submatrices]\label{elem} 
Keeping the notation as above, fix distinct $i,j \in [1,s]$. 
Any of the following three elementary block row operations on $X \in \Mat_{n}(R)$ corresponds to left multiplication of $X$ by a matrix in $\GL_{n}(R)$:
\begin{enumerate}
\item Exchange the $i$-th (block) row $X_{[i]} = [X_{[i,1]}, \cdots, X_{[i,s]}]$ with the $j$-th row $X_{[j]} = [X_{[j,1]}, \cdots, X_{[j,s]}]$; 
\item Multiply $X_{[i]} = [X_{[i,1]}, \cdots, X_{[i,s]}]$ on the left by any $g \in \GL_{n_{i}}(R)$ to get $gX_{[i]} = [gX_{[i,1]}, \cdots, gX_{[i,s]}]$;
\item For any $n_{i} \times n_{j}$ matrix $A$, replace $X_{[i]}$ with $X_{[i]} + A X_{[j]} = [X_{[i,1]} + AX_{[j,1]} , \cdots, X_{[i,s]} + AX_{[j,s]}]$.
\end{enumerate}
Likewise, any of the following three column block row operations on $X \in \Mat_{n}(R)$ corresponds to right multiplication of $X$ by a matrix in $\GL_{n}(R)$:
\begin{enumerate}
\item Exchange the $i$-th (block) column $X^{[i]} = \begin{bmatrix}X_{[1,i]} \\ \vdots \\ X_{[s,i]}\end{bmatrix}$ with the $j$-th column $X^{[j]} = \begin{bmatrix}X_{[1,j]} \\ \vdots \\ X_{[s,j]}\end{bmatrix}$; 
\item Multiply $X^{[i]} = \begin{bmatrix}X_{[1,i]} \\ \vdots \\ X_{[s,i]}\end{bmatrix}$ on the right by any $g \in \GL_{n_{i}}(R)$ to get $X^{[i]}g = \begin{bmatrix}X_{[1,i]}g \\ \vdots \\ X_{[s,i]}g\end{bmatrix}$;
\item For an $n_{j} \times n_{i}$ matrix $A$, replace $X^{[i]}$ with $X^{[i]} + X^{[j]}A =  \begin{bmatrix}X_{[1,i]} + X_{[1,j]}A \\ \vdots \\ X_{[s,i]} + X_{[s,j]}A\end{bmatrix}$.
\end{enumerate}
In particular, the operations above do not change the isomorphism class of $\cok(X)$.
\end{lem}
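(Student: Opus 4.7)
The plan is to exhibit, for each of the six listed operations, an explicit matrix $E \in \GL_n(R)$ (or $F \in \GL_n(R)$ for the column version) whose left (resp.\ right) multiplication realizes the operation. The lemma then reduces to the classical observation that $\cok(EXF) \simeq \cok(X)$ whenever $E, F \in \GL_n(R)$. The construction is a routine block analogue of the classical elementary matrices; the only actual work is bookkeeping with block indices and exhibiting two-sided inverses.

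Concretely, for block row operation (3), I would take $E$ to be $I_n$ modified by placing $A$ in its $(i,j)$-block, whose two-sided inverse (using $i \ne j$, so that the two nontrivial off-diagonal blocks do not interact under multiplication) is obtained by replacing $A$ with $-A$. For operation (2), I would take $E$ to be the block diagonal matrix with $g$ in position $(i,i)$ and $I_{n_k}$ in position $(k,k)$ for $k \neq i$, whose inverse replaces $g$ by $g^{-1}$. For operation (1), I would take $E$ to be the ordinary $n \times n$ permutation matrix that swaps the row indices belonging to blocks $i$ and $j$ (and fixes all others); this is a permutation matrix and hence in $\GL_n(R)$ over any commutative ring. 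A direct block-by-block multiplication then confirms that $EX$ has the desired form in each case. The three column operations are realized by right multiplication by the symmetric analogues of these matrices, or equivalently by transposing.

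For the final assertion about $\cok(X)$, left multiplication by $E \in \GL_n(R)$ induces an $R$-module automorphism of $R^n$ carrying $\im X$ to $\im(EX)$, hence an isomorphism on cokernels; and right multiplication by $F \in \GL_n(R)$ preserves $\im X$ as a submodule of $R^n$, so in fact $\cok(XF) = \cok(X)$ on the nose. I do not anticipate any real obstacle here; the one wrinkle worth flagging is that in operation (1), when $n_i \ne n_j$ the output $EX$ need not respect the same block partition that $X$ does, but this is immaterial since the lemma only demands that $EX$ be a well-defined element of $\Mat_n(R)$ with the prescribed ordinary rows.
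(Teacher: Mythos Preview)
Your proposal is correct and takes essentially the same approach as the paper: exhibit explicit block-elementary matrices for each row operation and deduce the column case by transposition. The paper is somewhat terser---it dismisses operations (1) and (3) as following directly from ordinary elementary operations and only writes out the block-diagonal matrix for operation (2)---whereas you spell out all three and add the remark about cokernels, but the substance is identical.
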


\begin{proof} 
We note that
\[
[X_{[i,1]}, \cdots, X_{[i,s]}]^{T} = \begin{bmatrix}X_{[i,1]}^{T} \\ \vdots \\ X_{[i,s]}^{T}\end{bmatrix}.
\]
Therefore, the column operations are given by taking the transposes of the row operations and it is enough to prove Lemma \ref{elem} for the block row operations.

The operations (1) and (3) follow directly from the corresponding ones from the usual elementary operations. The operation (2) corresponds to left multiplication by the block diagonal matrix with blocks $I_{n_1}, I_{n_2},\ldots, I_{i-1}, g, I_{i+1},\ldots, I_s$, that is, the matrix that comes from replacing the $[i,i]$-block of the identity matrix with $g$. This finishes the proof.
\end{proof}

\begin{proof}[Proof of Lemma \ref{l1deg1}: Step (2)] 
Suppose that $\bar{X} \in \Mat_{n}(\bF_{q})$ is of the form described in Step (1) of the outline of the proof given in Section \ref{proof_outline}. Recall that $R$ is a complete DVR with maximal ideal $\mf{m}$ and residue field $R/\mf{m} = \bF_{q}$.  Let $u$ be a uniformizer of $R$, so $\mf{m} = (u)$.

Any lift of $\bar{X} \in \Mat_{n}(\bF_{q})$ to $\Mat_{n}(R/\mf{m}^{N+1})$ is of the form
\[
X = \begin{bmatrix}
uA_{1} & uA_{2} & uA_{3} \\
uA_{4} & \id + uA_{5} & uA_{6} \\
uA_{7} & uA_{8} & M + uA_{9}
\end{bmatrix},
\]
where $uA_{1}, uA_{2}, uA_{3}, uA_{4}, uA_{5}, uA_{6}, uA_{7}, uA_{8}, uA_{9}$ are matrices over $R/\mf{m}^{N+1}$ all of whose entries are in $(u)$ such that 
\begin{itemize}
	\item $uA_{1} \in u\Mat_{r}(R/\mf{m}^{N+1})$,
	\item $uA_{5} \in u\Mat_{m_{1} + \cdots + m_{r} - r}(R/\mf{m}^{N+1}),\ \id = I_{m_{1} + \cdots + m_{r} - r}$, and 
	\item $uA_{9} \in u\Mat_{n - (m_{1} + \cdots + m_{r})}(R/\mf{m}^{N+1})$.
\end{itemize}

Choose representatives $\alpha_1,\ldots, \alpha_q \in R/\mf{m}^{N+1}$ for the equivalence classes in $(R/\mf{m}^{N+1})/(\mf{m}/\mf{m}^{N+1}) \simeq R/\mf{m} = \bF_{q}$.  The filtration $R/\mf{m}^{N+1} \supset \mf{m}/\mf{m}^{N+1} \supset \cdots \supset \mf{m}^{N}/\mf{m}^{N+1}$ shows that each element of $R/\mf{m}^{N+1}$ can be written uniquely as $a_0 + a_1 u + \cdots + a_N u^N$, where each $a_i$ is equal to some $\alpha_j$.  So each entry of the matrix $uA_k$ is of the form $a_{1}u + a_{2}u^{2} + \cdots + a_{N}u^{N}$ where each $a_i$ is equal to some $\alpha_j$.  There are  $q^{Nn^{2}}$ total possible choices for the entries of $uA_{1}, uA_{2}, uA_{3}, uA_{4}, uA_{5}, uA_{6}, uA_{7}, uA_{8}, uA_{9}$ over $R/\mf{m}^{N+1}$ if we do not require any condition on $\cok(X)$. We count the number of choices for which $\cok(X) \cong G$.

First, we freely choose $u\bs{A} := (uA_{2}, uA_{3}, uA_{4}, uA_{5}, uA_{6}, uA_{7}, uA_{8}, uA_{9})$ . There are $q^{N(n^{2} - r^{2})}$ possible choices for $u\bs{A}$. We claim that given $u\bs{A}$ and $uA_{1}$ there exist $P_{1,u\bs{A}}$ and $P_{2,u\bs{A}} \in \GL_{n}(R/\mf{m}^{N+1})$, depending on $u\bs{A}$ but not $uA_{1}$, such that
\begin{equation}\label{P1XP2}
P_{1,u\bs{A}}XP_{2,u\bs{A}} = 
\begin{bmatrix}
uA_{uA_{1}, p\bs{A}} & 0 & 0 \\
0 & \id + uB_{u\bs{A}} & 0 \\
0 & 0 & M + uA_{9}
\end{bmatrix},
\end{equation}
where 
\[
uB_{u\bs{A}} = u(A_{5} - uA_{6}(M + uA_{9})^{-1}A_{8}),
\]
and $uA_{uA_{1}, u\bs{A}}$ depends on $uA_1$ and $u\bs{A}$. We prove the existence of these matrices $P_{1,u\bs{A}}, P_{2,u\bs{A}}$ by describing (block) row and column operations that we can apply to $X$, using Lemma \ref{elem}.  Since $\id + uB_{p\bs{A}}$ and $M + uA_{9}$ are invertible modulo $\mf{m}$, they are also invertible as matrices over $R/\mf{m}^{N+1}$. Therefore,
\[
\cok(X) \simeq \cok(P_{1,u\bs{A}}XP_{2,u\bs{A}}) \simeq \cok(uA_{A_{1}, u\bs{A}}).
\]
The sequence of (block) row and column operations that we apply to $X$ makes it clear that the map taking $uA_1$ to $uA_{uA_{1}, u\bs{A}}$ is a bijection from $u\Mat_{r}(R/\mf{m}^{N+1})$ to itself.  Therefore, the number of choices of $uA_{1}, uA_{2}, uA_{3}, uA_{4}, uA_{5}, uA_{6}, uA_{7}, uA_{8}, uA_{9}$ for which $\cok(uA_{uA_{1}, u\bs{A}}) \simeq G$ is equal to 
\[
q^{N(n^{2} - r^{2})}\#\{uA \in u\Mat_{r}(R/\mf{m}^{N+1}) \colon \cok(uA) \simeq G\}.
\]

Given a choice of $u\bs{A}$, we now describe the (block) row and column operations taking $X$ to the matrix on the right-hand side of \eqref{P1XP2}. Applying Lemma \ref{elem}, subtract 
\[
uA_{6}(M + uA_{9})^{-1}[uA_{7}, uA_{8}, M + uA_{9}] = [u^{2}A_{6}(M + uA_{9})^{-1}A_{7}, u^{2}A_{6}(M + uA_{9})^{-1}A_{8}, uA_{6}]
\]
from the second block row of $X$ to get
\[
\begin{bmatrix}
uA_{1} & uA_{2} & uA_{3} \\
u(A_{4} - uA_{6}(M + uA_{9})^{-1}A_{7}) & \id + u(A_{5} - uA_{6}(M + uA_{9})^{-1}A_{8}) & 0 \\
uA_{7} & uA_{8} & M + uA_{9}
\end{bmatrix}.
\]
Next, subtract
\[
uA_{3}(M + uA_{9})^{-1}[uA_{7}, uA_{8}, M + uA_{9}] = [u^{2}A_{3}(M + uA_{9})^{-1}A_{7}, u^{2}A_{3}(M + uA_{9})^{-1}A_{8}, uA_{3}]
\]
from the first block row to get
\[
\begin{bmatrix}
u(A_{1} - uA_{3}(M + uA_{9})^{-1}A_{7}) & u(A_{2} - uA_{3}(M + uA_{9})^{-1}A_{8}) & 0 \\
u(A_{4} - uA_{6}(M + uA_{9})^{-1}A_{7}) & \id + u(A_{5} - uA_{6}(M + uA_{9})^{-1}A_{8}) & 0 \\
uA_{7} & uA_{8} & M + uA_{9}
\end{bmatrix}.
\]
Now subtract 
\[
uA_{8}(M + uA_{9})^{-1} \begin{bmatrix}
0 \\ 0 \\ M + uA_{9}\end{bmatrix}
= 
\begin{bmatrix}
0 \\ 0 \\ uA_{8}\end{bmatrix}
\]
from the second block column and then subtract 
\[
uA_{7}(M + uA_{9})^{-1}\begin{bmatrix}
0 \\ 0 \\ M + uA_{9}
\end{bmatrix} = 
\begin{bmatrix}
0 \\ 0 \\ uA_{7}
\end{bmatrix}
\]
from the first block column to get
\[
\begin{bmatrix}
u(A_{1} - uA_{3}(M + uA_{9})^{-1}A_{7}) & u(A_{2} - uA_{3}(M + uA_{9})^{-1}A_{8}) & 0 \\
u(A_{4} - uA_{6}(M + uA_{9})^{-1}A_{7}) & \id + u(A_{5} - uA_{6}(M + uA_{9})^{-1}A_{8}) & 0 \\
0 & 0 & M + uA_{9}
\end{bmatrix}.
\]
Since $\id + u(A_{5} - uA_{6}(M + uA_{9})^{-1}A_{8})$ is invertible over $R/\mf{m}^{N+1}$, we may apply similar arguments to get rid of the blocks directly above it and directly to the left of it. This gives a matrix of the desired form where $uA_{uA_{1}, u\bs{A}}$ is the upper left block.

It is clear that changing the entries of $uA_1$ changes the entries of $u(A_{1} - uA_{3}(R + uA_{9})^{-1}A_{7})$, and therefore also changes the entries of $uA_{uA_{1}, u\bs{A}}$.  What we have described above is a bijection from $u\Mat_{r}(R/\mf{m}^{N+1})$ to itself, defined by taking $uA_{1}$ to  $uA_{uA_{1}, u\bs{A}}$. This completes the proof of Step (2).
\end{proof}

\subsection{Counting matrices with a given cokernel} Before completing the proof of Step (3) of the outline given in Section \ref{proof_outline}, which finishes the proof of Lemma \ref{l1deg1}, we recall some additional facts.

\begin{lem}\label{SNF} 
Let $R$ be a PID and $X \in \Mat_n(R)$ have rank $r$ over the fraction field of $R$.  There exist $P,Q \in \GL_n(R)$ such that $PXQ = S$ is a diagonal matrix whose diagonal entries $(s_1, s_2, \ldots, s_r,0,\ldots, 0)$ satisfy $s_i \mid s_{i+1}$ for all $1\le i \le r-1$.  Since $\cok(X) \simeq \cok(PXQ) = \cok(S)$, we have 
\[
\cok(X) \cong R/s_1 R \oplus R/s_2 R \oplus \cdots \oplus R/s_r R \oplus R^{n-r}.
\]
Moreover, these $s_i$ are uniquely determined up to multiplication by a unit of $R$, and 
\[
s_1 \cdots s_i = \gcd(i \times i \text{ minors of } X).
\]
We call these $s_1,\ldots, s_r$ the \emph{invariant factors} of $\cok(X)$.
\end{lem}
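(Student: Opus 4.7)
The plan is to prove this classical Smith Normal Form result in four logically separable steps: existence of the diagonal form, the divisibility chain $s_i \mid s_{i+1}$, the characterization of the $s_i$ as gcd's of minors (which yields uniqueness), and the resulting description of $\cok(X)$.

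For existence, I would argue by induction on $n$, using a descent argument. On the set of nonzero elements of the PID $R$, define the size $\ell(a)$ to be the number of prime factors of $a$ counted with multiplicity (well-defined since $R$ is a UFD). Among all matrices in the $\GL_n(R) \times \GL_n(R)$-orbit of $X$, pick one whose smallest nonzero entry has minimal $\ell$-value, and by block row/column swaps move that entry to position $(1,1)$; call it $a$. If some entry $b$ in row $1$ is not divisible by $a$, write $ua + vb = d := \gcd(a,b)$ via Bezout, and apply a $2\times 2$ unimodular column operation built from the identity $\gcd(a,b)R = (a,b)R$ to replace $a$ with $d$; since $\ell(d) < \ell(a)$, this contradicts minimality. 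The same argument applies to column $1$ and, by adding the offending row or column into row $1$ first, to any entry of the matrix. Hence $a$ divides every entry, so ordinary row and column operations clear the rest of row $1$ and column $1$, giving a block diagonal matrix $\mathrm{diag}(a, X')$ with $X' \in \Mat_{n-1}(R)$. Induction on $n$ (with base case trivial) produces the diagonal form with diagonal entries $(s_1, \dots, s_r, 0, \dots, 0)$, where $r$ is the rank over the fraction field of $R$ (an invariant of the $\GL_n(R) \times \GL_n(R)$-orbit).

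For the divisibility chain, if some adjacent pair $(s_i, s_{i+1})$ fails $s_i \mid s_{i+1}$, add row $i+1$ to row $i$ and run the above reduction on the $2 \times 2$ block in rows $i, i+1$ and columns $i, i+1$; this replaces $(s_i, s_{i+1})$ by $(\gcd(s_i, s_{i+1}), \mathrm{lcm}(s_i, s_{i+1}))$. Iterating this operation and re-sorting terminates because at each step the ideal $(s_1) \supseteq (s_1 s_2) \supseteq \cdots$ strictly increases at some coordinate, and $R$ is Noetherian.

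For uniqueness and the minors formula, let $d_i(Y)$ denote the ideal of $R$ generated by the $i \times i$ minors of any $Y \in \Mat_n(R)$. The Cauchy--Binet formula shows that each $i \times i$ minor of $PY$ (for $P \in \GL_n(R)$) is an $R$-linear combination of $i \times i$ minors of $Y$, and vice versa upon multiplying by $P^{-1}$, so $d_i(PY) = d_i(Y)$; likewise for right multiplication. Hence $d_i$ is an invariant of the $\GL_n(R) \times \GL_n(R)$-orbit, and a direct computation for the diagonal matrix $S = \mathrm{diag}(s_1, \dots, s_r, 0, \dots, 0)$ with $s_i \mid s_{i+1}$ shows $d_i(S) = (s_1 s_2 \cdots s_i)$ for $i \le r$ and $d_i(S) = 0$ for $i > r$. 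Thus $s_1 \cdots s_i$ is the gcd of the $i \times i$ minors of $X$ up to a unit, and uniqueness follows.

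Finally, since $P, Q \in \GL_n(R)$ induce $R$-module automorphisms of $R^n$, we have $\cok(X) \simeq \cok(PXQ) = \cok(S)$, and $\cok(S)$ is visibly $\bigoplus_{i=1}^r R/s_i R \oplus R^{n-r}$. The main conceptual obstacle is ensuring the descent in Step 1 genuinely terminates in the non-Euclidean PID setting, which is handled by the well-ordering of $\ell$-values rather than by an explicit division algorithm; beyond that the proof is bookkeeping.
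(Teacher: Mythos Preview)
Your proof is correct and follows the standard textbook approach to the Smith Normal Form theorem. The paper does not actually prove this lemma: it is introduced with the phrase ``we recall some additional facts'' and stated without proof, as it is a classical result the authors take for granted. So there is nothing to compare your argument against.

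One small remark: your Step 2 is redundant given how you set up Step 1. Since your minimal pivot $a$ divides \emph{every} entry of the matrix (not just those in row 1 and column 1), after clearing row 1 and column 1 the residual block $X'$ still has all entries divisible by $a$; hence by induction the next pivot $s_2$ is automatically a multiple of $s_1 = a$, and the divisibility chain comes for free. Your separate gcd/lcm repair step with its Noetherian termination argument works, but is unnecessary. Also, in Step 2 your termination claim (``the ideal $(s_1) \supseteq (s_1 s_2) \supseteq \cdots$ strictly increases at some coordinate'') is slightly imprecise as written, since those products are actually invariants of the orbit by your own Step 3; the cleanest fix is simply to fold Step 2 into Step 1 as just described.
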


The following formula for the number of $\bar{X} \in \Mat_{n}(\bF_{q})$ of given rank is well-known.

\begin{lem}\label{rank} 
For any integers $n \geq 1$ and $0 \leq r \leq n$, the number of rank $r$ matrices in $\Mat_{n}(\bF_{q})$ is
\[
q^{n^{2} - (n-r)^{2}}\frac{\prod_{i=1}^{n}(1 - q^{-i}) \prod_{i=n-r+1}^{n}(1 - q^{-i})}{\prod_{i=1}^{n-r}(1 - q^{-i}) \prod_{i=1}^{r}(1 - q^{-i})}.
\]
\end{lem}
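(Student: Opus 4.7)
The plan is to count rank $r$ matrices by parametrizing them as pairs (image, surjection onto the image). Specifically, every $X \in \Mat_n(\bF_q)$ of rank $r$ is determined by its column space $V \subset \bF_q^n$ (an $r$-dimensional subspace) together with a surjective linear map $\bF_q^n \twoheadrightarrow V$, namely $X$ itself viewed as a map onto its image. Conversely, any such pair yields a rank $r$ matrix. So the count factors as (number of $r$-dimensional subspaces of $\bF_q^n$) times (number of surjective linear maps $\bF_q^n \twoheadrightarrow \bF_q^r$).

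First I would recall the standard Gaussian binomial identity
\[
\#\{r\text{-dimensional subspaces of } \bF_q^n\} = \binom{n}{r}_q = \frac{\prod_{i=n-r+1}^n (q^i - 1)}{\prod_{i=1}^r (q^i - 1)},
\]
which follows by counting ordered bases $(v_1,\dots,v_r)$ of such subspaces as $\prod_{i=0}^{r-1}(q^n - q^i)$ and dividing by $|\GL_r(\bF_q)| = \prod_{i=0}^{r-1}(q^r - q^i)$. Then I would count surjective linear maps $\bF_q^n \twoheadrightarrow \bF_q^r$ by the same ``choose an ordered image-basis for each standard basis vector'' argument but now ensuring surjectivity: pick the images of $e_1,\dots,e_r$ to be a basis of $\bF_q^r$ (giving $\prod_{i=0}^{r-1}(q^r - q^i)$ choices) and then freely choose the images of $e_{r+1},\dots,e_n$ in $\bF_q^r$. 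However the cleaner route is to count arbitrary surjections $\bF_q^n \twoheadrightarrow \bF_q^r$ directly as $\prod_{i=0}^{r-1}(q^n - q^i)$, picking the images of basis vectors one at a time subject to staying outside the span of the previous ones' preimages — equivalently, counting ordered generating $r$-tuples of $\bF_q^r$ viewed through duality, or by Möbius inversion over subspaces.

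Multiplying the two counts gives
\[
\#\{\text{rank } r\text{ matrices}\} = \frac{\prod_{i=n-r+1}^n (q^i - 1)}{\prod_{i=1}^r (q^i - 1)} \cdot \prod_{i=0}^{r-1}(q^n - q^i) = q^{r(r-1)/2} \cdot \frac{\left(\prod_{i=n-r+1}^n (q^i-1)\right)^2}{\prod_{i=1}^r (q^i-1)},
\]
after pulling out the factor $q^{0+1+\cdots+(r-1)}$ from $\prod_{i=0}^{r-1}(q^n-q^i)$.

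The remaining step is the bookkeeping to match the stated form. I would convert each $(q^i - 1)$ into $q^i(1 - q^{-i})$, tally the powers of $q$ using $r(r-1)/2 + (n-r+1)+\cdots+n \cdot 2 - (1+\cdots+r) = n^2 - (n-r)^2$, and combine $\prod_{i=n-r+1}^n (1 - q^{-i}) = \prod_{i=1}^n(1-q^{-i}) / \prod_{i=1}^{n-r}(1-q^{-i})$ to produce exactly the claimed expression. There is no real obstacle here; the only thing that requires care is verifying the exponent of $q$ collapses to $n^2 - (n-r)^2 = r(2n-r)$, which is a short arithmetic check.
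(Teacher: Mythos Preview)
Your argument is correct and the arithmetic checks out: the count factors as $\binom{n}{r}_q$ times the number of surjections $\bF_q^n \twoheadrightarrow \bF_q^r$, the latter equals $\prod_{i=0}^{r-1}(q^n - q^i)$ by duality with injections $\bF_q^r \hookrightarrow \bF_q^n$, and the power of $q$ does collapse to $r(2n-r) = n^2 - (n-r)^2$. The paper simply records the lemma as well-known and gives no proof, so there is nothing to compare against; your write-up supplies exactly the standard argument the paper omits. One small cleanup: your first attempt at counting surjections (forcing $e_1,\dots,e_r$ to hit a basis) undercounts and should be dropped rather than left in as a false start, and the phrase ``staying outside the span of the previous ones' preimages'' is not quite right---just invoke the duality with injections directly.
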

We will use the following formula for the number of automorphisms of a finite module over a complete DVR whose residue field is finite. See for example \cite[p. 236]{FW} for a proof.

\begin{lem}\label{Aut} Let $(R, \mf{m})$ be a complete DVR with a finite residue field $R/\mf{m} = \bF_{q}$. Suppose
\[
G \simeq (R/\mf{m}^{e_{1}})^{r_{1}} \times \cdots \times (R/\mf{m}^{e_{k}})^{r_{k}}
\]
for integers $e_{1} > e_{2} > \cdots > e_{k} \geq 1$ and $r_{1}, \dots, r_{k} \geq 1$. Then
\[
|\Aut_{R}(G)| = \prod_{i=1}^{k}q^{-r_{i}^{2}}|\GL_{r_{i}}(\bF_{q})| \prod_{1 \leq i,j \leq k}q^{\min(e_{i},e_{j})r_{i}r_{j}}.
\]
\end{lem}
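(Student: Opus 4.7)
My plan is to realize $\End_R(G)$ as a ring of block matrices, characterize automorphisms via their reduction modulo $\mf{m}$, exploit an asymmetry in the reduction to force $\bar\phi$ to be block lower triangular, and then count.

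Writing $G = \bigoplus_{i=1}^{k} (R/\mf{m}^{e_i})^{r_i}$, every $\phi \in \End_R(G)$ decomposes into blocks $\phi_{ij} \colon (R/\mf{m}^{e_j})^{r_j} \to (R/\mf{m}^{e_i})^{r_i}$ with entries in $\Hom_R(R/\mf{m}^{e_j}, R/\mf{m}^{e_i})$. This $\Hom$ group is cyclic of order $q^{\min(e_i,e_j)}$: if $e_i \leq e_j$ it equals $R/\mf{m}^{e_i}$, while if $e_i > e_j$ it equals the annihilator $\mf{m}^{e_i-e_j}/\mf{m}^{e_i}$, whose elements already lie in $\mf{m}\cdot(R/\mf{m}^{e_i})$. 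By Nakayama applied to $G/\phi(G)$ (together with the pigeonhole principle, since $G$ is finite), $\phi$ is an automorphism iff its reduction $\bar\phi \in \End_{\bF_q}(G/\mf{m}G)$ is invertible. Ordering the summands by $e_1 > \cdots > e_k$, the preceding observation shows that every off-diagonal block $\bar\phi_{ij}$ with $i < j$ vanishes, so $\bar\phi$ is block lower triangular with diagonal blocks $\bar\phi_{ii} \in \Mat_{r_i}(\bF_q)$. Consequently, $\phi$ is an automorphism iff $\bar\phi_{ii} \in \GL_{r_i}(\bF_q)$ for every $i$.

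Now I count. For each $i$, the reduction $\Mat_{r_i}(R/\mf{m}^{e_i}) \to \Mat_{r_i}(\bF_q)$ is surjective with kernel of size $q^{(e_i-1)r_i^2}$, so the set of $\phi_{ii}$ whose reduction is in $\GL_{r_i}(\bF_q)$ has size $q^{(e_i-1)r_i^2}|\GL_{r_i}(\bF_q)| = q^{-r_i^2}|\GL_{r_i}(\bF_q)|\cdot q^{e_i r_i^2}$. For each $i \neq j$, the block $\phi_{ij}$ is unconstrained in an $r_i r_j$-fold product of $\Hom_R(R/\mf{m}^{e_j}, R/\mf{m}^{e_i})$, contributing $q^{\min(e_i,e_j) r_i r_j}$ choices. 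Multiplying over all blocks and using $e_i = \min(e_i,e_i)$ to absorb the diagonal factors $q^{e_i r_i^2}$ into the off-diagonal product produces a single product $\prod_{1 \leq i,j \leq k} q^{\min(e_i,e_j) r_i r_j}$, which yields the stated formula.

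The only genuinely delicate step is the asymmetric reduction analysis: pinning down exactly when $\Hom_R(R/\mf{m}^{e_j}, R/\mf{m}^{e_i})$ sits inside $\mf{m}$ and using this to establish the block lower triangular structure of $\bar\phi$. Once that is in place, the combinatorics is routine bookkeeping involving a kernel-size count, the order of $\GL_{r_i}(\bF_q)$, and a single algebraic simplification.
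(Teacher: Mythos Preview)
Your proof is correct. The paper does not actually prove this lemma; it simply cites \cite[p.~236]{FW}, so there is no in-paper argument to compare against. Your block-matrix description of $\End_R(G)$, the identification of $\Hom_R(R/\mf{m}^{e_j},R/\mf{m}^{e_i})$ with the appropriate cyclic module, the Nakayama/pigeonhole reduction to invertibility of $\bar\phi$, and the observation that the blocks with $i<j$ vanish modulo $\mf{m}$ (forcing $\bar\phi$ to be block lower triangular) are all correct, and the final count matches the stated formula. This is essentially the standard argument one would give for this formula, and nothing is missing.
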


For clarity, we state Step (3) of the proof outline given in Section \ref{proof_outline} as a separate result.

\begin{lem}  
Let $(R, \mf{m})$ be a complete DVR with a finite residue field $R/\mf{m} = \bF_{q}$ and let $u$ be a uniformizer of $R$. Let $G$ be a finite $R$-module with $r_q(G) := \dim_{\bF_{q}}(G/\mf{m}G) = r$ and $N \in \bZ_{\geq 0}$ satisfy $\mf{m}^{N}G = 0$.  We have
\[
\#\{uA \in u\Mat_{r}(R/\mf{m}^{N+1}) \colon \cok(uA) \simeq G\}  = \frac{q^{Nr^{2} + r^{2}}\prod_{i=1}^{r}(1 - q^{-i})^{2}}{|\Aut_{R}(G)|}.
\]
\end{lem}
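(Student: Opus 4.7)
The plan is to count via a bijection between the matrices we wish to enumerate (tensored with $|\Aut_R(G)|$ isomorphism choices) and pairs consisting of a surjection onto $G$ together with an ordered generating set for its kernel. Set $R' := R/\mf{m}^{N+1}$, and write $G \simeq \bigoplus_{j=1}^{k} (R/\mf{m}^{e_j})^{r_j}$ with $e_1 > \cdots > e_k \geq 1$ and $\sum_j r_j = r$; note each $e_j \leq N$. The proposed bijection is
\[
\{uA \in u\Mat_r(R') \colon \cok(uA) \simeq G\} \times \Aut_R(G) \longleftrightarrow \{(\phi, (c_1, \ldots, c_r))\},
\]
where $\phi \in \Surj_R((R')^r, G)$ and $(c_1, \ldots, c_r)$ is an ordered generating $r$-tuple of $\ker(\phi)$. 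The forward map sends $(M, \sigma)$ to the surjection obtained by composing the canonical projection $(R')^r \tra \cok(M)$ with $\sigma \colon \cok(M) \xrightarrow{\sim} G$, together with the tuple of columns of $M$; the inverse assembles the $c_i$ as the columns of a matrix whose image is $\ker(\phi)$. Crucially, the condition $M \in u\Mat_r(R')$ is automatic: since $r_q(G) = r$, the reduction $\bar\phi \colon \bF_q^r \tra G/\mf{m}G$ is a surjection between $\bF_q$-spaces of equal dimension, hence an isomorphism, which forces $\ker(\phi) \subseteq u(R')^r$, so every $c_i \in u(R')^r$.

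The key structural input is that for \emph{any} surjection $\phi \colon (R')^r \tra G$, the kernel is isomorphic to the fixed $R$-module
\[
K := \bigoplus_{j=1}^{k} (R/\mf{m}^{N+1-e_j})^{r_j}.
\]
Indeed, given two such surjections $\phi_1, \phi_2$, projectivity of $(R')^r$ produces a lift $\alpha \colon (R')^r \to (R')^r$ with $\phi_1 \circ \alpha = \phi_2$; since $\bar\alpha = \bar\phi_1^{-1}\bar\phi_2$ is invertible, Nakayama's lemma gives $\alpha \in \Aut_R((R')^r)$, whence $\ker\phi_2 = \alpha^{-1}(\ker\phi_1) \simeq \ker\phi_1$. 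A direct computation using a basis of $(R')^r$ adapted to the summand decomposition of $G$ identifies this common kernel with $K$. I expect this verification of the isomorphism class of $K$, together with confirming that the column condition is automatic, to be the main obstacle.

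To count surjections, Nakayama's lemma shows that a homomorphism $(R')^r \to H$ is surjective if and only if its reduction modulo $\mf{m}$ is; the lifts of any fixed reduction form a torsor under $\Hom_R((R')^r, \mf{m}H) \simeq (\mf{m}H)^r$, so
\[
\#\Surj_R((R')^r, H) = |\GL_r(\bF_q)| \cdot |\mf{m}H|^r = |\GL_r(\bF_q)| \cdot |H|^r / q^{r^2}
\]
whenever $H/\mf{m}H \simeq \bF_q^r$. Applying this to $H = G$ and $H = K$ and using the identity $|G| \cdot |K| = q^{r(N+1)}$, I get
\[
\#\Surj_R((R')^r, G) \cdot \#\Surj_R((R')^r, K) = \frac{|\GL_r(\bF_q)|^2 \cdot q^{r^2(N+1)}}{q^{2r^2}} = q^{(N+1)r^2} \prod_{i=1}^{r} (1 - q^{-i})^2.
\]
Dividing by $|\Aut_R(G)|$ and rewriting $(N+1)r^2 = Nr^2 + r^2$ then recovers the desired formula.
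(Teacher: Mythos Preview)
Your argument is correct, and it proceeds by a genuinely different route from the paper's proof. The paper works directly with the $u$-adic digit expansion $uA = uA_{1} + u^{2}A_{2} + \cdots + u^{N}A_{N}$, lifts to $R$ in order to invoke Smith normal form, and then reads off the invariant factors of $\cok(u\tilde{A})$ level by level: the number of invariant factors equal to $u^{e_{k}}$ is the rank of $A_{e_{k}}$, then after conjugating to a standard form one looks at the top-left corner of $A_{e_{k-1}}'$, and so on. The count then reduces to a product of ``number of rank-$r_{k-i}$ matrices in $\Mat_{r-(r_{k}+\cdots+r_{k-i+1})}(\bF_{q})$'' factors, evaluated via Lemma \ref{rank}, and assembled using the explicit formula for $|\Aut_{R}(G)|$ in Lemma \ref{Aut}.

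Your approach bypasses Smith normal form entirely by setting up the bijection
\[
\{uA : \cok(uA)\simeq G\}\times \Aut_{R}(G)\ \longleftrightarrow\ \{(\phi,(c_{1},\dots,c_{r}))\},
\]
identifying the isomorphism type of $\ker(\phi)$ once and for all as $K=\bigoplus_{j}(R/\mf{m}^{N+1-e_{j}})^{r_{j}}$ via a projectivity/Nakayama argument, and then counting surjections by the clean formula $\#\Surj_{R}((R')^{r},H)=|\GL_{r}(\bF_{q})|\cdot|H|^{r}/q^{r^{2}}$ together with the identity $|G|\cdot|K|=q^{r(N+1)}$. This is shorter and more structural: you never need Lemma \ref{rank} or Lemma \ref{Aut}, and you never unpack the entries of the matrix. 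The paper's approach, by contrast, is more explicit about which entry-level conditions force $\cok(uA)\simeq G$; the authors note (Remark \ref{FW_remark}) that they give this longer proof deliberately because those entry-level techniques feed into the later Section \ref{sec_main_general} argument for general $l$. Your proof is closer in spirit to the original Friedman--Washington computation mentioned in Remark \ref{FW_remark}, though even there the reduction is different (they pass to $\cok(A')\simeq uG$ and then invoke a separate matrix count, rather than your direct surjection--kernel bijection).
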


As mentioned in Remark \ref{FW_remark}, this result is proven by Friedman and Washington \cite[p.236]{FW}. We give a different proof here that more directly considers the conditions on the entries of a matrix that determine whether or not its cokernel is isomorphic to a particular module $G$.

\begin{proof}
As in the proof of Step (2) of Lemma \ref{l1deg1}, choose representatives $\alpha_1,\ldots, \alpha_q \in R/\mf{m}^{N+1}$ for the equivalence classes of $R/\mf{m}$.  Every element of $R/\mf{m}^{N+1}$ can be expressed uniquely as $a_0 + a_1 u + a_2 u^2 + \cdots + a_N u^N$, where each $a_i$ is equal to some $\alpha_j$.  Let $uA \in \Mat_{r}(R/\mf{m}^{N+1})$ and express each entry of $uA$ in this form.  We have
\[
uA = uA_{1} + u^{2}A_{2} + \cdots + u^{N}A_{N},
\]
where each $A_i$ is an $r \times r$ matrix with entries in $\{\alpha_1,\ldots, \alpha_q\}$.

We want to use Lemma \ref{SNF} to describe the conditions on a matrix in $\Mat_{r}(R/\mf{m}^{N+1})$ that determine whether or not its cokernel is isomorphic to $G$.  However, Lemma \ref{SNF} only applies for matrices with entries in a PID.  Therefore, we take lifts of our matrices to $R$. Choose $u\tilde{A} \in u\Mat_{r}(R)$ to be a fixed lift of $uA \in \Mat_{r}(R/\mf{m}^{N+1})$.  We see that for any $i,j$, the $(i,j)$ entry of $u\tilde{A}$ is congruent to the $(i,j)$ entry of $uA$ modulo $\mf{m}^{N+1}$.  Consider the projection map $\pi \colon R \ra R/\mf{m}^{N+1}$ and suppose $\alpha_1',\ldots, \alpha_q'$ satisfy $\pi(\alpha_i') = \alpha_i$.  Note that $\alpha_1',\ldots, \alpha_q'$ are representatives of the equivalences classes of $R/\mf{m}$.  Expressing each entry of $u\tilde{A}$ in terms of its $u$-adic digit expansion, we can write
\[
u\tilde{A} = uA_{1} + u^{2}A_{2} + \cdots + u^{N}A_{N} + u^{N+1}A_{N+1} + \cdots 
\]
where each $A_i$ is an $r \times r$ matrix with entries in $\{\alpha_1',\ldots, \alpha_q'\}$.  For ease of notation, we identify $A_i$ with its image in $\Mat_{n}(\bF_{q})$ under the map defined by reducing each entry modulo $\mf{m}$.  When we refer to the rank of the matrix $A_i$ we always mean the rank of this matrix in $\Mat_n(\bF_q)$.

We now apply Lemma \ref{SNF} to $u\tilde{A}$ to determine the conditions on the entries of $A_{1}, A_{2}, A_{3}, \dots$ that imply $\cok(u\tilde{A}) \simeq G$. Since  $u^N G = 0$, Lemma \ref{SNF} implies that $\cok(uA) \simeq G$ if and only if $\cok(u\tilde{A}) \simeq G$. Therefore, these conditions are independent of the choice of $A_{N+1}, A_{N+2},\ldots$.  We count choices of $A_1,\ldots, A_N$ for which $\cok(u\tilde{A}) \simeq G$, completing the proof.

Since $R$ is a PID with unique maximal ideal $\mf{m} = (u)$, by the classification of modules over a PID there are unique integers $e_{1} > e_{2} > \cdots > e_{k} \geq 1$ and $r_{1}, \dots, r_{k} \geq 1$ such that 
\[
G \simeq (R/\mf{m}^{e_{1}})^{r_{1}} \times \cdots \times (R/\mf{m}^{e_{k}})^{r_{k}} = (R/u^{e_{1}}R)^{r_{1}} \times \cdots \times (R/u^{e_{k}}R)^{r_{k}}.
\]
Since $r = r_{q}(G)$, we have $r_1 + \cdots + r_k = r$.  By assumption, $u^{N}G = 0$, which is equivalent to $N \geq e_{1}$. 

The invariant factors of $\cok(u\tilde{A})$ are only determined up to multiplication by a unit, so we can assume each one is of the form $u^m$ for some $m\ge 1$.  We order the invariant factors by these exponents since $u^{m_1} \mid u^{m_2}$ if and only if $m_1\le m_2$.  By Lemma \ref{SNF}, the smallest invariant factor of $\cok(u\tilde{A})$ is equal to the greatest common divisor of the $1 \times 1$ minors of $u\tilde{A}$.  Therefore, $u^{e_k}$ is the smallest invariant factor of $\cok(u\tilde{A})$ if and only if $A_{1} = A_{2} = \cdots = A_{e_{k-1}} = 0$ and $A_{e_k}$ is nonzero.  

Our next goal is to count the invariant factors of this smallest size.\\
\noindent {\bf Claim}: Suppose that $A_{1} = A_{2} = \cdots = A_{e_{k-1}} = 0$.  Then $\cok(u\tilde{A})$ has exactly $r_{k}$ invariant factors $u^{e_{k}}$ if and only if $A_{e_{k}}$ has rank $r_{k}$.

\begin{proof}[Proof of Claim]
The rank of $A_{e_k}$ is the largest $m$ such that there exists a nonzero $m \times m$ minor of $A_{e_k}$.  Suppose $i \in [1,r]$.  By Lemma \ref{SNF} applied to $u\tilde{A}$, we have $\ s_1\cdots s_i = \gcd(i\times i \text{ minors of } u\tilde{A})$. As explained above, since $A_{1} = A_{2} = \cdots = A_{e_{k}-1} = 0$, if $A_{e_k} \neq 0$, then $u^{e_k} = s_1$ is the smallest invariant factor of $\cok(u\tilde{A})$.  Since $s_1 \mid \cdots \mid s_i$, we see that $(u^{e_k})^i \mid s_1\cdots s_i$.

If there exists a nonzero $r_{k} \times r_{k}$ minor of $A_{e_{k}}$, then there is an $r_{k} \times r_{k}$ minor of $u\tilde{A}$ equal to $(u^{e_k})^{r_k}$ times a unit in $R$.  This implies $\gcd(r_k \times r_k \text{ minors of } u\tilde{A}) = (u^{e_k})^{r_k}$, and therefore $\cok(u\tilde{A})$ has at least $r_k$ invariant factors equal to $u^{e_k}$.

By the same reasoning, if there is a nonzero $(r_{k}+1) \times (r_{k}+1)$ minor of $A_{e_k}$, then $\cok(u\tilde{A})$ has at least $r_k+1$ invariant factors equal to $u^{e_k}$.  Therefore, $\cok(u\tilde{A})$ has exactly $r_k$ invariant factors equal to $u^{e_k}$ if and only if $A_{e_{k}} \in \Mat_{r}(\bF_{q})$ has rank $r_{k}$. 
\end{proof}

Given $A_{1} = A_{2} = \cdots = A_{e_{k}-1} = 0$ and $A_{e_{k}}$ with rank $r_{k}$, we find the constraints on $A_{e_{k}+1}, A_{e_{k}+2}, \ldots$ that determine whether $\cok(u\tilde{A})$ has no invariant factors between $u^{e_k}$ and $u^{e_{k-1}}$ and exactly $r_{k-1}$ invariant factors equal to $u^{e_{k-1}}$.  Since $A_{e_{k}}$ has rank $r_{k}$, there exist $P_{A_{e_{k}}}, Q_{A_{e_{k}}} \in \GL_{r}(\bF_{q})$ such that
\[
P_{A_{e_{k}}}A_{e_{k}}Q_{A_{e_{k}}} = \begin{bmatrix}
0 & 0 \\
0 & I_{r_k}
\end{bmatrix}.
\]
Let $P_{A_{e_{k}}}'$ be an arbitrary lift of $P_{A_{e_{k}}}$ to $\GL_{r}(R)$ and $Q_{A_{e_{k}}}'$ be an arbitrary lift of $Q_{A_{e_{k}}}$ to $\GL_{r}(R)$.  We have 
\[
\cok(u\tilde{A}) \simeq \cok(P_{A_{e_{k}}}'u\tilde{A}Q_{A_{e_{k}}}') = \cok\left(u^{e_k} A_{e_k}'+  u^{e_k+1} A_{e_k+1}' + \cdots \right),
\]
where $A_j' = P_{A_{e_{k}}}' A_{j} Q_{A_{e_{k}}}'$.

We claim that the next smallest size of an invariant factor of $\cok(u\tilde{A})$ after $u^{e_k}$ is $u^{e_{k-1}}$ if and only if $A_{e_k+1}',\ldots, A_{e_{k-1}-1}'$ have all of their entries in the top left $(r-r_k) \times (r-r_k)$ corner equal to $0$ and $A_{e_{k-1}}'$ has a nonzero entry in its top left $(r-r_k) \times (r-r_k)$ corner.

Suppose $j \in [e_k+1, e_{k-1}-1]$.  If $A_j'$ has a nonzero entry in its top left $(r-r_k) \times (r-r_k)$ corner, then there is an $(r_k+1) \times (r_k+1)$ minor of $u \tilde{A}$ equal to $u^j (u^{e_k})^{r_k}$ times a unit in $R$.  Lemma \ref{SNF} now implies that the next smallest invariant factor of $\cok(u\tilde{A})$ after $u^{e_{k}}$ has size at most $u^j$.

Now suppose $A_{e_k+1}',\ldots, A_{e_{k-1}-1}'$ have all of their entries in the top left $(r-r_k) \times (r-r_k)$ corner equal to $0$. Every $(r_k + 1) \times (r_k + 1)$ minor of $P_{A_{e_{k}}}' u\tilde{A}Q_{A_{e_{k}}}'$ is divisible by $u^{e_{k-1}} (u^{e_k})^{r_k}$.  Lemma \ref{SNF} implies that the next smallest invariant factor of $\cok(u\tilde{A})$ after $u^{e_{k}}$ is at least $u^{e_{k-1}}$.  If there is a nonzero entry in the top left $(r-r_k) \times (r-r_k)$ corner of $A_{e_{k-1}}'$, then there is an $(r_k+1) \times (r_k+1)$ minor of $u \tilde{A}$ equal to $u^{e_{k-1}} (u^{e_k})^{r_k}$ times a unit in $R$.  If every entry in the top left $(r-r_k) \times (r-r_k)$ corner of $A_{e_{k-1}}'$ is $0$, then every $(r_k+1) \times (r_k+1)$ minor of $u \tilde{A}$ is divisible by $u^{e_{k-1}+1} (u^{e_k})^{r_k}$.  Thus, this next smallest invariant factor of $\cok(u\tilde{A})$ is equal to $u^{e_{k-1}}$ if and only if there is a nonzero entry in the top left $(r-r_k) \times (r-r_k)$ corner of $A_{e_{k-1}}'$.

Suppose that the second smallest invariant factor of $\cok(u\tilde{A})$ is equal to $u^{e_{k-1}}$.  We now determine the number of invariant factors of this size.  For any $t$, the greatest common divisor of the $(r_k + t) \times (r_k+t)$ minors of $P_{A_{e_{k}}}' u\tilde{A} Q_{A_{e_{k}}}'$ is divisible by $(u^{e_k})^{r_k} (u^{e_{k-1}})^t$.  By Lemma \ref{SNF}, there are at least $t$ invariant factors of $\cok(u\tilde{A})$ equal to $u^{e_{k-1}}$ if and only if there is an $(r_k+t) \times (r_k+t)$ minor of $P_{A_{e_{k}}}' u\tilde{A} Q_{A_{e_{k}}}'$ equal to $(u^{e_k})^{r_k} (u^{e_{k-1}})^t$ times a unit in $R$.

Consider the top left $(r-r_k) \times (r-r_k)$ corner of $A_{e_{k-1}}'$.  Suppose it has rank $t$.  This matrix has a nonzero $t \times t$ minor, but each of its $(t+1) \times (t+1)$ minors is $0$.  Therefore, there is a $(r_k+t) \times (r_k+t)$ minor of $P_{A_{e_{k}}}' u\tilde{A} Q_{A_{e_{k}}}'$ equal to $(u^{e_k})^{r_k} (u^{e_{k-1}})^t$ times a unit in $R$, but no $(r_k+t+1) \times (r_k+t+1)$ minor of $P_{A_{e_{k}}}' u\tilde{A} Q_{A_{e_{k}}}'$ is equal to $(u^{e_k})^{r_k} (u^{e_{k-1}})^{t+1}$ times a unit in $R$.  We conclude that $\cok(u\tilde{A})$ has exactly $r_{k-1}$ invariant factors equal to $u^{e_{k-1}}$ if and only if $t = r_{k-1}$.  As above, we multiply by appropriate $P_{A_{e_{k-1}}}', Q_{A_{e_{k-1}}}' \in \GL_r(R)$.  We then repeat this argument for the remaining invariant factors of $G$.

When considering the matrix that determines the number of invariant factors of $\cok(u\tilde{A})$ equal to $u^{e_1}$, we need the top left $r_1 \times r_1$ piece of an $(r_1+r_2) \times (r_1+r_2)$ submatrix to have rank $r_1$.  There are no conditions on the remaining $2 r_1 r_2 + r_2^2$ entries of this submatrix.  Similarly, when we consider the matrix that determines the number of invariant factors of $\cok(u\tilde{A})$ equal to $u^{e_2}$, we need the top left $(r_1+r_2) \times (r_1+r_2)$ piece of an $(r_1+r_2+r_3) \times (r_1+r_2+r_3)$ submatrix to have rank $r_2$.  There are no conditions on the remaining $2 (r_1 + r_2)r_3 + r_3^2$ entries of this submatrix.  Continuing in this way, we see that the total number of choices of $uA \in \Mat_r(R/\mf{m}^{N+1})$ for which $\cok(u\tilde{A}) \simeq G$ is equal to
\[
\bigg(\prod_{i=0}^{k-1} \#\left\{\begin{array}{c}
X \in \Mat_{r-(r_{k} + r_{k-1} + \cdots + r_{k-i+1})}(\bF_{q}) \colon \\
\rk(X) = r_{k-i}
\end{array}\right\}\bigg)
\left(
q^{\sum_{i=1}^k (N-e_i)(r_i^2 + 2r_i \sum_{1\le j<i} r_j)}\right).
\]
We have
\[
q^{\sum_{i=1}^k (N-e_i)(r_i^2 + 2r_i \sum_{1\le j<i} r_j)}
= q^{N r^2 - \sum_{i=1}^k e_i r_i^2 - 2\sum_{1\le j< i \le k} e_i r_i r_j}
 =  \frac{q^{Nr^{2}}}{\prod_{1 \leq i, j \leq k} q^{\min(e_{i}, e_{j})r_{i}r_{j}} }
\]
since $r = r_{1} + \cdots + r_{k}$. 

Applying Lemma \ref{rank} shows that
\begin{align*}
\prod_{i=0}^{k-1} & \#\left\{\begin{array}{c}
X \in \Mat_{r-(r_{k} + r_{k-1} + \cdots + r_{k-i+1})}(\bF_{q}) \colon \\
\rk(X) = r_{k-i}
\end{array}\right\} \\
&= q^{r^{2}}\frac{(1 - q^{-1})^{2}(1 - q^{-2})^{2} \cdots (1 - q^{-r})^{2}}{(1 - q^{-1})(1 - q^{-2}) \cdots (1 - q^{-r_{1}}) \cdot \cdots \cdot (1 - q^{-1})(1 - q^{-2}) \cdots (1 - q^{-r_{k}})} \\ 
&= q^{r^{2}}\frac{\prod_{i=1}^{r}(1 - q^{-i})^{2}}{q^{-(r_{1}^{2} + \cdots + r_{k}^{2})}|\GL_{r_{1}}(\bF_{q})| \cdots |\GL_{r_{k}}(\bF_{q})|}.
\end{align*}

Therefore, we have
\begin{align*}
& \bigg(\prod_{i=0}^{k-1} \#\left\{\begin{array}{c}
X \in \Mat_{r-(r_{k} + r_{k-1} + \cdots + r_{k-i+1})}(\bF_{q}) \colon \\
\rk(X) = r_{k-i}
\end{array}\right\}\bigg)
\left(
q^{\sum_{i=1}^k (N-e_i)(r_i^2 + 2r_i \sum_{1\le j<i} r_j)}\right)\\
&= \frac{q^{Nr^{2} + r^{2}}\prod_{i=1}^{r}(1 -q^{-i})^{2}}{\prod_{i=1}^{k}q^{-r_{i}^{2}}|\GL_{r_{i}}(\bF_{q})| \prod_{1 \leq i, j \leq k}q^{\min(e_{i}, e_{j})r_{i}r_{j}}} \\
&= \frac{q^{Nr^{2} + r^{2}}\prod_{i=1}^{r}(1 - q^{-i})^{2}}{|\Aut_{R}(G)|},
\end{align*}
where in the last step we applied Lemma \ref{Aut}.
\end{proof}

\section{The Proof of Theorem \ref{main3}}\label{sec_main_general} 

In this section, we use the special case of Theorem \ref{main3} where $l=1$ to prove Theorem \ref{main3} in general.  We first recall some notation and assumptions.  We are given
\begin{itemize}
	\item monic polynomials $P_{1}(t), \dots, P_{l}(t) \in \bZ_{p}[t]$ of degree at most $2$ whose images in $\bF_{p}[t]$ are distinct and irreducible and a finite module $G_{j}$ over $\bZ_{p}[t]/(P_{j}(t))$ for each $1 \leq j \leq l$, 
	\item $N \in \bZ_{\geq 0}$ such that $p^{N}G_j = 0$ for each $1\le j \le l$, and
	\item $\bar{X} \in \Mat_{n}(\bF_{p})$ such that for each $1\le j \le l$,
	\[
	\dim_{\bF_{q_{j}}}\left(\cok(P_{j}(\bar{X}))\right) = r_{q_{j}}(G_{j}),
	\]
	where $q_j = p^{\deg(P_j)}$.
	\end{itemize}
We count $X \in \Mat_{n}(\bZ/p^{N+1}\bZ)$ such that for each $1 \leq j \leq l$ we have $\cok(P_{j}(X)) \simeq G_{j}$.

\begin{proof}[Proof of Theorem \ref{main3}] 
We use an argument similar to two steps of the proof outline given for Lemma \ref{l1deg1} in Section \ref{proof_outline}.  There exists $\bar{Q} \in \GL_{n}(\bF_{p})$ such that
\[
\bar{Q}^{-1}\bar{X}\bar{Q} = \begin{bmatrix}
J_{1} & 0 & \cdots & 0 & 0 \\
0 & J_{2} & \cdots & 0 & 0 \\
\vdots & \vdots & \cdots & \vdots & \vdots \\
0 & 0 & \cdots & J_{l} & 0  \\
0 & 0 & \cdots & 0 & M
\end{bmatrix},
\]
where each eigenvalue of $J_{j}$ over $\overline{\bF}_p$ is a root of $P_j(t)$ and $M$ is a square matrix over $\bF_{p}$ with no eigenvalue over $\overline{\bF}_{p}$ that is a root of any of $P_{1}(t), \dots, P_{l}(t)$.  Note that if $G_j$ is trivial, then $J_j$ is the empty matrix.

Fix a lift $Q \in \GL_{n}(\bZ/p^{N+1}\bZ)$ of $\bar{Q} \in \GL_{n}(\bF_{p})$. Given a lift $X \in \Mat_{n}(\bZ/p^{N+1}\bZ)$ of $\bar{X} \in \Mat_{n}(\bF_{p})$, the matrix $Q^{-1}XQ$ is a lift of $\bar{Q}^{-1}\bar{X}\bar{Q}$. On the other hand, if $Y \in \Mat_{n}(\bZ/p^{N+1}\bZ)$ is a lift of $\bar{Q}^{-1}\bar{X}\bar{Q}$, then $QYQ^{-1}$ is a lift of $\bar{X}$.  Note that $P_{j}(Q^{-1}XQ) = Q^{-1}P_{j}(X)Q$.  As in Step (1) of the proof of Lemma \ref{l1deg1} described in Section \ref{proof_outline}, taking advantage of this bijection between the lifts of $\bar{X}$ and the lifts of $\bar{Q}^{-1}\bar{X}\bar{Q}$, it is enough to prove Theorem \ref{main2} for the case where 
\[
\bar{X} = \begin{bmatrix}
J_{1} & 0 & \cdots & 0 & 0 \\
0 & J_{2} & \cdots & 0 & 0 \\
\vdots & \vdots & \cdots & \vdots & \vdots \\
0 & 0 & \cdots & J_{l} & 0  \\
0 & 0 & \cdots & 0 & M
\end{bmatrix}.
\]

Fix $j \in [1,l]$ and let $R_{j} := (\bZ/p^{N+1}\bZ)[t]/(P_{j}(t))$.  Over $R_{j}/pR_{j} = \bF_{p}[t]/(P_{j}(t))$, the matrix $J_{j} - \bar{t}\id$ is not invertible, while for any $k \neq j$, the matrix $J_{k} - \bar{t}\id$ is invertible.  The matrix $M - \bar{t}\id$ is also invertible.

Any lift $X \in \Mat_{n}(\bZ/p^{N+1}\bZ)$ of $\bar{X}$ is of the form
\[
X = 
\begin{bmatrix}
pA_{11} + J_{1} & pA_{12} & \cdots & pA_{1l} & pA_{1,l+1} \\
pA_{21} & pA_{22} + J_{2} & \cdots & pA_{2l} & pA_{2,l+1} \\
\vdots & \vdots & \cdots & \vdots & \vdots \\
pA_{l1} & pA_{l2} & \cdots & pA_{ll} + J_{l} & pA_{l,l+1}  \\
pA_{l+1,1} & pA_{l+1,2} & \cdots & pA_{l+1,l} & pA_{l+1,l+1} + M
\end{bmatrix},
\]
where the $pA_{st}$ are matrices over $\bZ/p^{N+1}\bZ$ all of whose entries are $0$ modulo $p$, and each of $J_1,\ldots, J_l, M$ is the unique lift to $\bZ/p^{N+1}\bZ$ with entries in $\{0,1,\ldots, p-1\}$ of the corresponding matrix over $\bF_p$. Let $n_{1}, \dots, n_l, n_{l+1} \in \bZ_{\geq 0}$ so that $pA_{ii} \in p\Mat_{n_{i}}(\bZ/p^{N+1}\bZ)$ for $1 \leq i \leq l+1$. In particular, $n = n_{1} + \cdots + n_{l} + n_{l+1}$. 

A key idea in this argument is to work with expansions of elements of $\bZ/p^{N+1}\bZ$ in terms of powers of $p$, and to use these expansions of elements to give similar expansions for matrices.  Each entry of $pA_{st}$ is of the form $a_{1}p + a_{2}p^{2} + \cdots + a_{N-1}p^{N-1} + a_{N}p^{N}$, where $a_{i} \in \{0,1,\ldots, p-1\}$.  Without any constraints there are $p^{Nn^{2}}$ total lifts $X$ of $\bar{X}$. Recall that by Lemma \ref{Lee}, we have $\cok(P_{j}(X)) \simeq \cok_{R_{j}}(X - \bar{t}I_{n})$.

We choose a sequence of (block) row and column operations from Lemma \ref{elem} to transform the matrix 
\begin{equation}\label{XtIn}
X - \bar{t}I_{n} = 
\begin{bmatrix}
pA_{11} + J_{1} - \bar{t}\id & pA_{12} & \cdots & pA_{1l} & pA_{1,l+1} \\
pA_{21} & pA_{22} + J_{2} - \bar{t}\id & \cdots & pA_{2l} & pA_{2,l+1} \\
\vdots & \vdots & \ddots & \vdots & \vdots \\
pA_{l1} & pA_{l2} & \cdots & pA_{ll} + J_{l} - \bar{t}\id & pA_{l,l+1}  \\
pA_{l+1,1} & pA_{l+1,2} & \cdots & pA_{l+1,l} & pA_{l+1,l+1} + M - \bar{t}\id
\end{bmatrix}
\end{equation}
into the block diagonal matrix whose blocks are given by 
\[
pA_{11} + J_{1} - \bar{t}\id + p^{2}B_{1}^{(j)} + \bar{t} p^2C_{1}^{(j)}, \dots,  pA_{ll} + J_{l} - \bar{t}\id + p^{2}B_{l}^{(j)} + \bar{t} p^2 C_{l}^{(j)},  pA_{l+1,l+1} + M - \bar{t}\id,
\]
where $p^2 B_{i}^{(j)}, p^2 C_{i}^{(j)} \in p^2 \Mat_{n_i}(\bZ/p^{N+1}\bZ)$ depend on $pA_{st}$ with $1 \leq s, t \leq l+1$ except they do not depend on $pA_{ii}$.  We conclude that $X-\bar{t} I_n$ and this block diagonal matrix have isomorphic cokernels over $R_j$.

Suppose $i,j\in [1,l]$ are distinct.  Since $J_{i} - \bar{t}\id$ is invertible over $R_j/p R_j$, we see that $pA_{ii} + J_{i} - \bar{t}\id + p^{2}B_{i}^{(j)} + \bar{t}p^2 C_{i}^{(j)}$ is invertible over $R_j$.  Similarly, $pA_{l+1,l+1} + M - \bar{t}\id$ is invertible over $R_j$.  This implies
\begin{align*}
\cok(P_{j}(X)) &\simeq \cok_{R_{j}}(X - \bar{t}I_{n}) \\
&\simeq \cok_{R_{j}}(pA_{jj} + J_{j} - \bar{t}\id + p^{2}B_{j}^{(j)} + \bar{t} p^2 C_{j}^{(j)}) \\
&= \cok_{R_{j}}(J_{j} + pA_{jj} + p^{2}B_{j}^{(j)} - \bar{t}(\id  - p^{2}C_{j}^{(j)})).
\end{align*}
Multiplying by an invertible matrix does not change the isomorphism class of the cokernel, so
\begin{eqnarray*}
\cok_{R_{j}}(J_{j} + pA_{jj} + p^{2}B_{j}^{(j)} - \bar{t}(\id  - p^{2}C_{j}^{(j)}))
& \simeq & \cok_{R_{j}}((\id  - p^{2}C_{j}^{(j)})^{-1}(J_{j} + pA_{jj} + p^{2}B_{j}^{(j)}) - \bar{t}\id)\\
&\simeq & \cok\big(P_{j}\big((\id  - p^{2}C_{j}^{(j)})^{-1}(J_{j} + pA_{jj} + p^{2}B_{j}^{(j)})\big)\big).
\end{eqnarray*}

Fix any choices for $\{pA_{st}\colon 1 \le s,t \le l+1,\ s\neq t\}$ and also fix a choice of $pA_{l+1,l+1}$.  Note that there are $p^{N(n^2 - (n_1^2 + \cdots + n_l^2))}$ total possible choices for these matrices.  The matrices $p^2 B_i^{(j)},p^2 C_i^{(j)} \in p^2 \Mat_{n_i}(\bZ/p^{N+1}\bZ))$ play an important role in our proof.  Given choices for the $pA_{st}$ above and for $pA_{l+1,l+1}$ we can think of $p^2 B_i^{(j)}$ and $p^2 C_i^{(j)}$ as being functions of $(pA_{11},\ldots, pA_{i-1,i-1},pA_{i+1,i+1},\ldots, pA_{ll})$.  We usually do not usually include this dependence in the notation for $p^2 B_i^{(j)}$ and $p^2 C_i^{(j)}$ because it would things much harder to read, but in a few cases we include this additional notation for emphasis.

Consider the function $\Phi_N$ that takes $(pA_{11}, pA_{22},\ldots, pA_{ll})$ to 
\[
\left(
(\id  - p^{2}C_{1}^{(1)})^{-1}(J_{1} + pA_{11} + p^{2}B_{1}^{(1)}),
\cdots,
(\id  - p^{2}C_{l}^{(l)})^{-1}(J_{l} + pA_{ll} + p^{2}B_{l}^{(l)})
\right).
\]
So $\Phi_N$ is a map from $p\Mat_{n_{1}}(\bZ/p^{N+1}\bZ) \times \cdots \times p\Mat_{n_{l}}(\bZ/p^{N+1}\bZ)$ to 
\[
\left\{\begin{array}{c}
Y_1 \in \Mat_{n_{1}}(\bZ/p^{N+1}\bZ) \colon\\
Y_1 \equiv J_{1} \pmod{p}
\end{array}\right\} \times \cdots \times
\left\{\begin{array}{c}
Y_l \in \Mat_{n_{l}}(\bZ/p^{N+1}\bZ) \colon\\
Y_l \equiv J_{l} \pmod{p}
\end{array}\right\}.
\]

\noindent{\bf Claim}: $\Phi_N$ is a bijection.

Assuming the claim for now, we complete the proof of Theorem \ref{main3}.  By the discussion above, $\cok(P_j(X)) \simeq G_j$ for each $j$ if and only if $\cok\big(P_{j}\big((\id  - p^{2}C_{j}^{(j)})^{-1}(J_{j} + pA_{jj} + p^{2}B_{j}^{(j)})\big)\big) \simeq G_j$ for each $j$.  Since $\Phi_N$ is a bijection, the number of choices for $(pA_{11},\ldots, pA_{ll})$ such that $\cok(P_j(X)) \simeq G_j$ for each $j$ is equal to 
\[
\# \left\{\begin{array}{c}
(Y_{1}, \ldots, Y_{l}) \in \Mat_{n_{1}}(\bZ/p^{N+1} \bZ) \times \cdots \times \Mat_{n_{l}}(\bZ/p^{N+1} \bZ) : \\
\cok(P_{j}(Y_{j})) \simeq G_{j} \text{ and } Y_{j} \equiv J_{j} \pmod{p} \text{ for } \text{for } 1 \leq j \leq l
\end{array}\right\}.
\]
It is clear that this is equal to
\[
\prod_{j=1}^l \# \left\{\begin{array}{c}
Y_j \in \Mat_{n_{j}}(\bZ/p^{N+1} \bZ) : \\
\cok(P_{j}(Y_{j})) \simeq G_{j} \text{ and } Y_{j} \equiv J_{j} \pmod{p} 
\end{array}\right\}.
\]
By the $l=1$ case of Theorem \ref{main2}, this is equal to 
\[
\prod_{j=1}^{l} p^{Nn_{j}^{2}}\frac{q_{j}^{r_{q_{j}}(G_{j})^{2}}\prod_{i=1}^{r_{q_{j}}(G_{j})}(1 - q_{j}^{-i})^{2}}{|\Aut_{\bZ_p[t]/(P_j(t))}(G_{j})|}.
\]
Multiplying by $p^{N(n^2 - (n_1^2 + \cdots + n_l^2))}$ to account for all possible choices of $\{pA_{st}\colon 1 \le s,t \le l+1,\ s\neq t\}$ and $pA_{l+1,l+1}$ completes the proof of Theorem \ref{main3}.

We now need only prove that $\Phi_N$ is a bijection.  Since $\Phi_N$ is a map between finite sets of the same size, we need only prove that it is surjective.  We define a sequence of maps $\Phi_1,\ldots, \Phi_{N-1}, \Phi_N$ and prove that each one is a bijection.  Let $k \in [1,N]$.  Our next goal is to define the map $\Phi_k$.
\begin{itemize}
\item For each element of $\{pA_{st}\colon 1 \le s,t \le l+1,\ s\neq t\}$, let $pA_{st}^{[k]}$ be the matrix over $\bZ/p^{k+1}\bZ$ such that $pA_{st} \equiv pA_{st}^{[k]} \pmod{p^{k+1}}$.   
\item Let $pA_{l+1,l+1}^{[k]}$ be the matrix over $\bZ/p^{k+1}\bZ$ such that $pA_{l+1,l+1} \equiv pA_{l+1,l+1}^{[k]} \pmod{p^{k+1}}$. 
\end{itemize}

For each $j \in [1,l]$, choose a matrix $pA_{jj} \in \Mat_{n_j}(\bZ/p^{k+1}\bZ)$.  We construct a matrix analogous to the one given in \eqref{XtIn} using $pA_{11},\ldots, pA_{ll}, pA_{l+1,l+1}^{[k]}$ and the elements of $\{pA^{[k]}_{st}\colon 1 \le s,t \le l+1,\ s\neq t\}$.  We then apply the same sequence of (block) row and column operations that we applied to the matrix in \eqref{XtIn}.  This gives a block diagonal matrix whose blocks are given by
\[
pA_{11} + J_{1} - \bar{t}\id + p^{2}B_{1}^{(j,k)} + \bar{t} p^2C_{1}^{(j,k)}, \dots,  pA_{ll} + J_{l} - \bar{t}\id + p^{2}B_{l}^{(j,k)} + \bar{t} p^2 C_{l}^{(j,k)},  pA_{l+1,l+1} + M - \bar{t}\id,
\]
where $p^2 B_i^{(j,k)}, p^2 C_i^{(j,k)} \in p^2 \Mat_{n_i}(\bZ/p^{k+1}\bZ)$ are functions of $(pA_{11},\ldots, pA_{i-1,i-1},pA_{i+1,i+1},\ldots, pA_{ll})$.  A key thing to note is that for any $1\le k'\le k\le N$, if $pA_{jj} \equiv pA_{jj}' \pmod{p^{k'+1}}$ for each $j \in [1,l]$, then
\begin{align*}
& p^2 B_i^{(j,k)}(pA_{11},\ldots, pA_{i-1,i-1},pA_{i+1,i+1},\ldots, pA_{ll}) \\
\equiv \ \ \ &  p^2 B_i^{(j,k')}(pA'_{11},\ldots, pA'_{i-1,i-1},pA'_{i+1,i+1},\ldots, pA'_{ll}) \pmod{p^{k'+1}}.
\end{align*}
An analogous result holds for the matrices $p^2 C_i^{(j,k)}$ and $p^2 C_i^{(j,k')}$. 

We define a map
\[
\Phi_k \colon p\Mat_{n_{1}}(\bZ/p^{k+1}\bZ) \times \cdots \times p\Mat_{n_{l}}(\bZ/p^{k+1}\bZ) 
\ra
\left\{\begin{array}{c}
Y_1 \in \Mat_{n_{1}}(\bZ/p^{k+1}\bZ) \colon\\
Y_1 \equiv J_{1} \pmod{p}
\end{array}\right\} \times \cdots \times
\left\{\begin{array}{c}
Y_l \in \Mat_{n_{l}}(\bZ/p^{k+1}\bZ) \colon\\
Y_l \equiv J_{l} \pmod{p}
\end{array}\right\}
\]
that takes $(pA_{11}, pA_{22},\ldots, pA_{ll})$ to 
\[ \left(
(\id - p^{2}C_{1}^{(1,k)})^{-1}(J_{1} + pA_{11}+ p^{2}B_{1}^{(1,k)}), 
\cdots,
(\id - p^{2}C_{l}^{(l,k)})^{-1}(J_{l}+ pA_{ll}+ p^{2}B_{l}^{(l,k)})
\right).
\]

Now that we have defined $\Phi_1,\ldots, \Phi_N$, we see that they are compatible with reducing the inputs modulo powers of $p$.  More precisely, for any $1\le k' \le k \le N$, if $pA_{jj} \equiv pA_{jj}' \pmod{p^{k'+1}}$ for every $j \in [1,l]$, then 
\[
\Phi_k(pA_{11}, pA_{22},\ldots, pA_{ll}) \equiv \Phi_{k'}(pA'_{11}, pA'_{22},\ldots, pA_{ll}') \pmod{p^{k'+1}}.
\]

We now prove that $\Phi_1$ is a bijection.  We have
\[
(\id  - p^{2}C_{j}^{(j,1)})^{-1}(J_{j} + pA_{jj}+ p^{2}B_{j}^{(j,1)}) \equiv J_j  + p A_{jj} \pmod{p^2},
\]
so it is clear that $\Phi_1$ is a bijection.

We now assume that $\Phi_{k-1}$ is a bijection and prove that $\Phi_k$ is a surjection.  Choose $(Y_1,\ldots, Y_l)$ such that for each $j$, we have $Y_j \in \Mat_{n_j}(\bZ/p^{k+1}\bZ)$ with $Y_j \equiv J_j\pmod{p}$.  Such a matrix can be written uniquely as
\[
Y_j = J_j + p S_j^{[1]} + p^2 S_j^{[2]} + \cdots + p^k S_j^{[k]},
\]
where each $S_j^{[i]}$ is an $n_j \times n_j$ matrix with entries in $\{0,1,\ldots, p-1\}$.  Define $Y_j' \in \Mat_{n_j}(\bZ/p^k \bZ)$ by
\[
Y_j' = J_j+ p S_j^{[1]} + p^2 S_j^{[2]} + \cdots + p^{k-1} S_j^{[k-1]},
\]
so $Y_j \equiv Y_j' \pmod{p^k}$.

Since $\Phi_{k-1}$ is a bijection, there exists $(pA_{11}',\ldots, pA_{ll}') \in p\Mat_{n_{1}}(\bZ/p^{k}\bZ) \times \cdots \times p\Mat_{n_{l}}(\bZ/p^{k}\bZ)$ such that for each $j \in [1,l]$,
\[
(\id  - p^{2}C_{j}^{(j,k-1)})^{-1}(J_{j} + pA_{jj}' + p^{2}B_{j}^{(j,k-1)}) = J_j + p S_j^{[1]} + p^2 S_j^{[2]} + \cdots + p^{k-1} S_j^{[k-1]} =Y_j'.
\]
There are unique $n_j \times n_j$ matrices $T_j^{[k']}$ with entries in $\{0,1,\ldots, p-1\}$ such that 
\[
pA_{jj}' = p T_j^{[1]} + p^2 T_j^{[2]} + \cdots + p^{k-1} T_j^{[k-1]}.
\]
Let $pA_j^*(T_j^{[k]}) \in p\Mat_{n_{j}}(\bZ/p^{k+1}\bZ)$ be defined by
\[
pA_j^*(T_j^{[k]}) =  \left(p T_j^{[1]} + p^2 T_j^{[2]} + \cdots + p^{k-1} T_j^{[k-1]}\right) + p^k T_j^{[k]},
\]
where $T_j^{[k]}$ is an $n_j \times n_j$ matrix with entries in $\{0,1,\ldots, p-1\}$.

We claim that there exist $T_1^{[k]},\ldots, T_l^{[k]}$ such that 
\[
\Phi_k\left(pA_1^*(T_1^{[k]}),\ldots, pA_l^*(T_l^{[k]})\right) =  (Y_1,\ldots, Y_l).
\]
For any choice of $(T_1^{[k]},\ldots, T_l^{[k]})$, since for each $j\in [1,l]$ we have $pA_j^*(T_j^{[k]}) \equiv pA_{jj}' \pmod{p^k}$, we see that
\[
\Phi_k(pA_1^*(T_1^{[k]}),\ldots, pA_l^*(T_l^{[k]})) \equiv
\Phi_{k-1}(pA_{11}',\ldots, pA_{ll}') = (Y_1',\ldots, Y_l') \pmod{p^k}.
\]
So there exist $S_1^*,\ldots, S_l^*$, where each $S_j^*$ is an $n_j \times n_j$ matrix with entries in $\{0,1,\ldots, p-1\}$, such that 
\[
(\id  - p^{2}C_{j}^{(j,k)})^{-1}(J_{j} + p A_j^*(T_j^{[k]})+ p^{2}B_{j}^{(j,k)}) = 
J_j + p S_j^{[1]} + p^2 S_j^{[2]} + \cdots + p^{k-1} S_j^{[k-1]} + p^k S_j^* \in \Mat_{n_j}(\bZ/p^{k+1} \bZ).
\]

The crucial observation is that because of the factor of $p^2$,  
\[
p^{2}B_{j}^{(j,k)}(pA_1^*(T_1^{[k]}),\ldots, pA_l^*(T_l^{[k]}))\ \ \  \text{ and}\ \ \ \ p^{2}C_{j}^{(j,k)}(pA_1^*(T_1^{[k]}),\ldots, pA_l^*(T_l^{[k]}))
\] 
depend on $pA_1^*(T_1^{[k]}),\ldots, pA_l^*(T_l^{[k]})$ but they do not depend on $T_1^{[k]}, \ldots, T_l^{[k]}$.  That is, once we have fixed choices for $(T_i^{[1]},\ldots, T_i^{[k-1]})$ for each $i$, the matrices $p^{2}B_{j}^{(j,k)}, p^{2}C_{j}^{(j,k)}$ are determined.  Therefore, we see that each $S_j^*$ depends only on a choice of $T_j^{[k]}$ and not on the choices for $T_i^{[k]}$ where $i \neq j$.  

By definition, a different choice of $T_j^{[k]}$ gives a different matrix $pA_j^*(T_j^{[k]})$.  It is now clear that a different matrix $pA_j^*(T_j^k)$ gives a different matrix $S_j^*$.  That is, the map taking $T_j^{[k]}$ to 
\[
(\id  - p^{2}C_{j}^{(j,k)})^{-1}(J_{j} + p A_j^*(T_j^{[k]})+ p^{2}B_{j}^{(j,k)}) \in
\left\{
\begin{array}{c}
Y_j^* \in \Mat_{n_j}(\bZ/p^{k+1}\bZ) \colon\\
Y_j^* \equiv Y_j' \pmod{p^k}
\end{array}
\right\}
\]
is injective.  Since this is an injective map between finite sets of the same size, it is a bijection.  We conclude that there is a choice of $(T_1^{[k]},\ldots, T_l^{[k]})$ such that for each $j\in [1,l]$, we have 
\[
(\id  - p^{2}C_{j}^{(j,k)})^{-1}(J_{j} + p A_j^*(T_j^{[k]})+ p^{2}B_{j}^{(j,k)}) = Y_j.
\]
Therefore, $\Phi_k$ is a surjection and so, a bijection.  Continuing in this way, we conclude that $\Phi_N$ is a bijection.

\end{proof}

\section*{Acknowledgements}

The second author was supported by NSF Grant DMS 1802281. We thank Yifeng Huang for helpful conversations.  We thank Jungin Lee for several helpful conversations and for contributing a key idea about polynomials of degree $2$.  We thank Kelly Isham for comments related to an earlier draft of this paper.


\begin{thebibliography}{99}
\bibitem[CH2021]{CH}
G. Cheong and Y.Huang,
\emph{Cohen--Lenstra distributions via random matrices over complete discrete valuation rings with finite residue fields}, Illinois J. Math. \textbf{65} (2021), no. 2, 385--415.

\bibitem[CL1983]{CL}
H. Cohen and H. W. Lenstra, Jr.,
\emph{Heuristics on class groups of number fields}, Proceedings of the Journees Arithmetiques held at Noordwijkerhout, the Netherlands, July 11-15, 1983, Lecture
Notes in Mathematics \textbf{1068} (1983), Springer-Verlag, New York, 33--62.

\bibitem[EVW2016]{EVW}
J. Ellenberg, A. Venkatesh, and C. Westerland,
\emph{Homological stability for {H}urwitz spaces and the {C}ohen--{L}enstra conjecture over function fields}, Ann. of Math. (2) \textbf{183} (2016), no. 3, 729--786.

\bibitem[Ev2004]{Evans}
S. Evans,
\emph{Elementary divisors and determinants of random matrices over a local field},
Stochastic Process. Appl. \textbf{102} (2002), no. 1, 89--102.

\bibitem[FW1987]{FW}
E. Friedman and L. C. Washington,
\emph{On the distribution of divisor class groups of curves over a finite field}, Th\'eorie des Nombres (Quebec, PQ, 1987), de Gruyter, Berlin (1989), 227--239.

\bibitem[Lee2022]{LeePaper}
J. Lee, \emph{Joint distribution of the cokernels of random $p$-adic matrices}, in preparation (2022).

\bibitem[VP2021]{VanPeski}
R. Van Peski,
\emph{Limits and fluctuations of $p$-adic random matrix products},
Sel. Math. New Ser. \textbf{27}, 98 (2021).\\
\url{https://doi.org/10.1007/s00029-021-00709-3}

\bibitem[Woo2016]{Woo16}
M. M. Wood,
\emph{Asymptotics for number fields and class groups}, Directions in number theory, 291-339, Assoc. Women Math. Ser., 3, Springer, [Cham], 2016.

\bibitem[Woo2017]{Woo17}
M. M. Wood,
\emph{The distribution of sandpile groups of random graphs}, J. Amer. Math. Soc. \textbf{30} (2017), no. 4, 915--958.

\bibitem[Woo2019]{Woo19}
M. M. Wood,
\emph{Random integral matrices and the {C}ohen--{L}enstra heuristics}, Amer. J. Math. \textbf{141} (2019), no. 2, 383--398.



\end{thebibliography}
\end{document}